\newtheorem{theorem}{Theorem}
\newtheorem{conjecture}{Conjecture}
\newtheorem{lemma}{Lemma}
\newtheorem{proposition}{Proposition}
\newtheorem{corollary}{Corollary}
\newtheorem{definition}{Definition}
\newtheorem{remark}{Remark}
\newcommand{\bbbn}{\mathbb{N}}
\newcommand{\Land}{\mathbin{\&}}
\def\ato{tree-semilattice}
\def\mato{Borel {\ato}}
\newcommand{\ff}[2]{({#1})_{#2}}
\begin{document}
\title[Limits of Structures]{Limits of Structures and the  Example of Tree-Semilattices}
\author{Pierre Charbit}
\address{Pierre Charbit\\
LIAFA - Universit\'e Paris Diderot - Paris 7\\
Case 7014\\
75205 Paris Cedex 13, France}
\email{pierre.charbit@liafa.univ-paris-diderot.fr}

\author{Lucas Hosseini}
\address{Lucas Hosseini\\
Centre d'Analyse et de Math\'ematiques Sociales (CNRS, UMR 8557)\\
  190-198 avenue de France, 75013 Paris, France}
  \email{lucas.hosseini@gmail.com}
\thanks{Supported by grant ERCCZ LL-1201 
and CE-ITI P202/12/G061, and by the European Associated Laboratory ``Structures in
Combinatorics'' (LEA STRUCO)}

\author{Patrice Ossona~de~Mendez}
\address{Patrice~Ossona~de~Mendez\\
Centre d'Analyse et de Math\'ematiques Sociales (CNRS, UMR 8557)\\
  190-198 avenue de France, 75013 Paris, France
	--- and ---
Computer Science Institute of Charles University (IUUK)\\
   Malostransk\' e n\' am.25, 11800 Praha 1, Czech Republic}
\email{pom@ehess.fr}
\thanks{Supported by grant ERCCZ LL-1201 and by the European Associated Laboratory ``Structures in
Combinatorics'' (LEA STRUCO)}

\date{\today}

\subjclass[2010]{05C99 (Graph theory)}

 \keywords{Graph limits \and Structural limit \and Random-free graphon \and Tree-order \and $m$-partite Cograph}

\begin{abstract}
The notion of left convergent sequences of graphs introduced by Lov\' asz et al. (in relation with homomorphism densities for fixed patterns and Szemer\'edi's regularity lemma) got increasingly studied over the past $10$ years. Recently, Ne\v set\v ril and Ossona de Mendez introduced a general framework for convergence of sequences of structures. In particular, the authors introduced the notion of $QF$-convergence, which is a natural generalization of left-convergence. In this paper, we initiate study of $QF$-convergence for structures with functional symbols by focusing on the particular case of tree semi-lattices. We fully characterize the limit objects and give an application to the study of left convergence of $m$-partite cographs, a generalization of cographs.

%
%

\end{abstract}

\maketitle
\section{Introduction}
The study of limits of graphs gained recently a major interest \cite{Borgs20081801,Borgs2012,pre05504139,LovaszBook,Lov'asz2006,Lovasz2010}.
In the framework studied in the aforementioned papers, 
a sequence $(G_n)_{n\in\bbbn}$ of graphs is said {\em left-convergent} if, for every (finite) graph $F$, the probability
$$t(F,G_n)=\frac{{\rm hom}(F,G_n)}{|G_n|^{|F|}}$$
that a random map $f:V(F)\rightarrow V(G_n)$ is a {\em homomorphism} (i.e. a mapping preserving adjacency) converges as $n$ goes to infinity.  (For a graph $G$, we denote by $|G|$ the {\em order} of $G$, that is the number of vertices of $G$.) In this case, the limit object can be represented by means of a {\em graphon}, that is
a measurable function $W:[0,1]^2\rightarrow [0,1]$. The definition of the 
function $t$ above is extended to graphons by
$$
t(F,W)=\idotsint \prod_{ij\in E(F)}W(x_i,x_j)\ {\rm d}x_1\,\dots\,{\rm d}x_p
$$
(where we assume that $F$ is a graph with vertex set $\{1,\dots,p\}$) and
then the graphon $W$ is the left-limit of a left-convergent sequence
of graphs $(G_n)_{n\in\bbbn}$ if for every graph $F$ it holds
$$
t(F,W)=\lim_{n\rightarrow\infty}t(F,G_n).$$

For $k$-regular hypergraphs, the notion of left-convergence extends in the natural way, and left-limits --- called {\em hypergraphons} --- are measurable functions  $W : [0, 1]^{2^k -2} \rightarrow [0, 1]$ and have been  constructed by Elek and Szegedy using ultraproducts \cite{ElekSze} (see also \cite{Zhao2014}). These limits were also studied by Hoover \cite{Hoover1979}, Aldous \cite{AldousICM}, and Kallenberg \cite{Kallenberg2005} in the setting of exchangeable random arrays (see also~\cite{Austin2008}). 
For other structures, let us mention limits of permutations \cite{hoppen2011limits, Hoppen201393} and limits of posets \cite{brightwell2010continuum, janson2011poset,hladky2015poset}.

A {\em signature} $\sigma$ is a set of symbols of relations and functions with their arities. A $\sigma$-structure $\mathbf A$ is defined by its {\em domain} $A$ and an interpretation in $A$ of all the relations and functions declared in $\sigma$.  A $\sigma$-structure is {\em relational} if the signature $\sigma$ only contains symbols of relations. Thus relational structures are natural generalization of $k$-uniform hypergraphs. To the opposite, a $\sigma$-structure is {\em functional} (or called an {\em algebra})  if the signature $\sigma$ only contains symbols of functions. Denote by ${\rm QF}_p(\sigma)$ the fragment of all quantifier free formulas with $p$ free variables (in the language of $\sigma$) and by ${\rm QF}(\sigma)=\bigcup_p {\rm QF}_p(\sigma)$ the fragment of all quantifier free formulas. 
In the following, we shall use
${\rm QF}_p$ and ${\rm QF}$ when the signature $\sigma$ is clear from context. For a formula $\phi$ with $p$ free variables, the set of satisfying assignments of $\phi$ is denoted by $\phi(\mathbf A)$:
$$\phi(\mathbf A)=\{(v_1,\dots,v_p)\in A^p:\ \mathbf A\models \phi(v_1,\dots,v_p)\}$$

In the general framework of finite $\sigma$-structures (that is a $\sigma$-structure with finite domain), the notion of $QF$-convergence has been introduced by Ne\v set\v ril and the third author \cite{CMUC}. In this setting, a sequence $(\mathbf A_n)_{n\in\bbbn}$ of $\sigma$-structures is {\em $QF$-convergent} if, for every quantifier free formula $\phi$ with free variables $x_1,\dots,x_p$, the probability
\begin{equation}
\label{eq:SP}
\langle\phi,\mathbf A_n\rangle=\frac{|\phi(\mathbf A_n)|}{|A_n|^p}
\end{equation}
that a random (uniform independent) assignment to the free variables of $\phi$ of elements of $A_n$ satisfies $\phi$ converges as $n$ goes to infinity. These notions naturally extends to {\em weighted structures}, that is structures equipped with a non uniform probability measure.

The notion of QF-convergence extends several notions of convergence. 

It was proven in \cite{CMUC} that a sequence of graphs (or of $k$-uniform hypergraphs) with order going to infinity is QF-convergent if and only if it is left-convergent. This is intuitive, as for every finite graph $F$ with vertex set $\{1,\dots,p\}$ there is a quantifier-free formula $\phi_F$ with free variable $x_1,\dots,x_p$ such that for every graph $G$ and every $p$-tuple $(v_1,\dots,v_p)$ of vertices of $G$ it holds
$G\models\phi_F(v_1,\dots,v_p)$ if and only if the map
$i\mapsto v_i$ is a homomorphism from $F$ to $G$.

As mentioned before the left-limit of a left-convergent sequence of graphs can be represented by a graphon. However it cannot, in general, be represented by a Borel graph ---  that is a graph having a standard Borel space $V$ as its vertex set and a Borel subset of $V\times V$ as its edge set.
A graphon $W$ is {\em random-free} if it is almost everywhere $\{0,1\}$-valued.
Notice that a random-free graphon is essentially the same (up to isomorphism mod $0$) as a Borel graph equipped with a non-atomic probability measure on $V$.
A class of graph $\mathcal C$ is said to be {\em random-free} if, for every
left-convergent sequence of graphs $(G_n)_{n\in\bbbn}$ with $G_n\in\mathcal C$ (for all $n$) the sequence $(G_n)_{n\in\bbbn}$ has a random-free limit. 

Local convergence of graphs with bounded degree has been defined by Benjamini and Schramm~\cite{Benjamini2001}. A sequence $(G_n)_{n\in\bbbn}$ of graphs with maximum degree $D$ is {\em local-convergent} if, for every $r\in\bbbn$, the distribution of the isomorphism types of the distance $r$-neighborhood of a random vertex of $G_n$ converges as $n$ goes to infinity. 
This notion can also be expressed by means of QF-convergence (in a slightly stronger form).
Let $G_1,\dots,G_n,\dots$ be graphs with maximum degree strictly smaller than $D$. By considering a proper edge coloring of $G_n$ by $D$ colors, we can represent $G_n$ as a functional structure $\mathbf V_n$ with signature containing $D$ unary functions $f_1,\dots,f_D$, where $V_n$ is the vertex set of $G_n$ and $f_1,\dots,f_D$ are defined as follows: for every vertex $v\in V_n$,
$f_i(v)$ is either the unique vertex adjacent to $v$ by an edge of color $i$, or $v$ if no edge of color $i$ is incident to $v$. It is easily checked that if the sequence $(\mathbf V_n)_{n\in\bbbn}$ is 
QF-convergent if and only if  the sequence $(G_n)_{n\in\bbbn}$ of edge-colored graphs is local-convergent. 
If $(\mathbf V_n)_{n\in\bbbn}$ is QF-convergent, then the limit is a {\em graphing}, that is a functional structure $\mathbf V$ (with same signature as $\mathbf{V}_n$) such that
$V$ is a standard Borel space, and $f_1,\dots,f_D$ are measure-preserving involutions. 

In the case above, the property of the functions to be involutions is essential. The case of quantifier free limits of general functional structures is open, even in the case of unary functions. Only the simplest case of a single unary function has been recently settled \cite{MapLim}. The case of QF-limits of functional structures with a single binary function is obviously at least as complicated as the case of graphs, as a graph $G$ can be encoded by means of a (non-symmetric) function $f$  defined by $f(u,v)=u$ if $u$ and $v$ are adjacent, and $f(u,v)=v$ otherwise, with the property that QF-convergence of the encoding is equivalent to left-convergence of the graphs. The natural guess here for a limit object is the following:
\begin{conjecture}
Let $\sigma$ be the signature formed by a single binary functional symbol $f$.

Then the limit of a QF-convergent sequence of finite $\sigma$-structures can be represented by means of a measurable function
$w:[0,1]\times[0,1]\rightarrow \mathfrak{P}([0,1])$, where $\mathfrak{P}([0,1])$ stands for the space of probability measures on $[0,1]$.
\end{conjecture}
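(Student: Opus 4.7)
The plan is to follow the three-step scheme familiar from graphon theory: (i) extract a candidate limit kernel $w$ from the sequence by a compactness argument, (ii) define the interpretation of QF-formulas over such a kernel, and (iii) verify that the value $\langle\phi,w\rangle$ produced by that interpretation coincides with $\lim_n\langle\phi,\mathbf{A}_n\rangle$ for every QF-formula $\phi$.

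For step (i), fix arbitrary bijections $\iota_n:A_n\to\{1,\dots,N_n\}$ and identify each element $a\in A_n$ with the interval $I_a^n=[(\iota_n(a)-1)/N_n,\iota_n(a)/N_n)$ carrying Lebesgue measure. Define $w_n:[0,1]^2\to\mathfrak{P}([0,1])$ by letting $w_n(x,y)$ be the uniform measure on $I_{f(a,b)}^n$ whenever $x\in I_a^n$ and $y\in I_b^n$. A Markov kernel from $[0,1]^2$ to $[0,1]$ is equivalent to a probability measure on $[0,1]^3$ with first two marginals equal to Lebesgue measure on $[0,1]^2$; this set is weak-$*$ compact, so up to passing to a subsequence we may assume $w_n\to w$ for some measurable $w:[0,1]^2\to\mathfrak{P}([0,1])$.

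For step (ii), recursively attach to each term $t(x_{i_1},\dots,x_{i_k})$ a random element of $[0,1]$: a variable yields a Dirac mass, and $t=f(t_1,t_2)$ yields the random element obtained by sampling from $w(T_1,T_2)$, where $T_1,T_2$ are the random elements already associated with $t_1,t_2$. An atomic formula $t_1=t_2$ is interpreted as the event that the two random elements coincide, Boolean combinations are handled pointwise, and $\langle\phi,w\rangle$ is the expectation, over $(x_1,\dots,x_p)\in[0,1]^p$ uniform, of the probability that $\phi$ holds.

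The hard part---and the main obstacle---is step (iii), because in $\mathbf{A}_n$ the probability of an equality such as $f(v_1,v_2)=v_1$ can stay of order one, whereas in a generic diffuse continuous structure every equality event has measure zero. A correct limit must therefore encode these equality probabilities through the atomic part of $w(x,y)$, typically via atoms at $x$ or $y$; this is exactly what happens in the graph encoding mentioned in the introduction, where one would have $w(x,y)=W(x,y)\,\delta_x+(1-W(x,y))\,\delta_y$ for the graphon $W$. The weak-$*$ topology used in step (i) does not track such atoms under limits, so either the compactness step must be strengthened to a finer topology that controls the mass $w(x,y)$ places on each of $x$, $y$ and the iterated term values, or one should bypass compactness by taking an ultraproduct $\mathbf{A}^\ast$ with its Loeb measure (which carries an honest copy of $f$ and trivially realizes $\lim_n\langle\phi,\mathbf{A}_n\rangle$) and then define $w$ as the disintegration of $f$ along a measurable coarsening $\mathbf{A}^\ast\to[0,1]$. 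Uniqueness of $w$ up to measure-preserving transformations of $[0,1]$ should then follow as in the graphon case.
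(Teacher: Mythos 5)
The statement you have been asked to prove is explicitly labelled a \emph{conjecture} in the paper; the authors call it ``the natural guess here for a limit object'' and offer no proof. So there is no argument in the paper to compare your attempt against, and the fact that the problem is open should itself be taken seriously: the authors went on to treat only the much more restricted case of tree-semilattices precisely because the general single-binary-function case is hard.

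Your proposal does not, and does not claim to, close the conjecture: you yourself flag step~(iii) as ``the main obstacle.'' That obstacle is genuine, and it is worth being precise about why it is fatal to the plan as written. In step~(i) the kernels $w_n$ are entirely atomic---$w_n(x,y)$ is a point mass for a.e.~$(x,y)$---and a weak-$*$ limit of point masses can spread out arbitrarily, erasing exactly the information you need: in a finite $\sigma$-structure the quantities $\Pr[f(X,Y)=X]$, $\Pr[f(f(X,Y),Z)=f(X,Z)]$, etc.\ are generically bounded away from $0$ and $1$, whereas if $w(x,y)$ has no atoms at the relevant points these probabilities vanish under your step-(ii) semantics. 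So the limit kernel must carry a carefully structured atomic part concentrated on the (iterated) term values of $x$ and $y$, and the weak-$*$ topology on $\mathfrak{P}([0,1]^3)$ does not even see these loci, let alone preserve the mass on them. Passing to a ``finer topology'' is not a localized technical fix: it amounts to specifying, for every QF term, what fraction of mass $w(x,y)$ places on that term's value, which is essentially the content of the conjecture itself. The ultraproduct route you sketch at the end is more promising---an ultraproduct with Loeb measure does realize all the Stone pairings---but the nontrivial step there is disintegrating the internal function $f$ along a Borel coarsening to $[0,1]$ in a way that is both measurable and preserves all QF densities; this is exactly where the known constructions for relational structures (hypergraphons) do not transfer, because composing $f$ with itself creates terms of unbounded depth with no analogue in the hypergraph setting. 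In short: your outline correctly identifies where the difficulty lies, but it does not resolve it, and as the paper's framing indicates, no resolution is currently known.
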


As witnessed by the case of local-convergence of graphs with bounded degrees, the ``random-free'' case, that is the case where the limit object can be represented by a Borel structure with same signature, is of particular interest. In this paper, we will focus on the case of simple structures defined by a single binary function  --- the tree semi-lattices --- and we will prove that they admit Borel tree semi-lattices for QF-limits. Conversely, we will prove that every Borel tree semi-lattices (with domain equipped with an atomless probability measure)
can be arbitrarily well approximated by a finite tree semi-lattices, hence leading to a full characterization of QF-limits of finite tree semi-lattices.

\section{Statement of the Results}
\label{sec:res}
  A {\em \ato{}} is an algebraic structure $\mathbf{T}
  = (T, \wedge)$ such that:
  \begin{enumerate}
  \item $(T, \wedge)$ is a meet semi-lattice (i.e. an idempotent
    commutative semigroup);
  \item $\forall x, y, z$ s.t. $x \wedge z = x$ and $y \wedge z = y$
    it holds $x \wedge y \in \{x, y\}$.
  \end{enumerate}

Because we consider structures with infimum operator $\wedge$, note that we shall use the symbol $\&$ for the logical conjunction.

Each {\ato} $\mathbf T$ canonically defines a partial order on its domain by
$x\leq y$ if $x\wedge y=x$. In the case where $T$ is finite, it is a partial order induced by the ancestor relation of a rooted  tree.

It is possible to add finitely many unary relations $M_1, \dots, M_k$ to the signature of {\ato}s. In this case, we speak of {\em colored} \ato{}s, and we define the color of a vertex $v$ as the set of the indices of those unary relations it belongs to: $c(v)=\{i: M_i(v)\}$.

  A {\em \mato} is a {\ato} $\mathbf{T}$ on a standard Borel space $T$, such that $\wedge:T\times T\rightarrow T$ is Borel.
Note that every finite {\ato} is indeed a {\mato}.  

Our main results concerning QF-convergence of {\ato}s are as follows:
\begin{itemize}
\item every ${\rm QF}$-convergent sequence of finite colored weighted {\ato}s
admits a limit, which is a Borel colored {\ato} (Theorem~\ref{thm:limwato}), and conversely: every 
Borel colored {\ato} is the limit of some  ${\rm QF}$-convergent sequence of colored weighted {\ato}s (Corollary~\ref{cor:wsamp});
\item every ${\rm QF}$-convergent sequence of colored uniform {\ato}s
admits a limit, which is an atomless Borel colored {\ato} (Corollary~\ref{cor:limato}), and conversely: every atomless
Borel colored {\ato} is the limit of some finite ${\rm QF}$-convergent sequence of colored uniform {\ato}s (Theorem~\ref{thm:uapprox}).
\end{itemize}

The notion of $m$-partite cographs has been introduced in \cite{Ganian2013}, based on the generalization of the characterization of cographs by means of cotrees \cite{Corneil1981}: a graph $G$ is an {\em $m$-partite cograph} if there exists a colored {\ato} $\mathbf{T}$, such that the vertices of $G$ are the leaves of $\mathbf T$, the leaves of $\mathbf{T}$ are colored with a set of at most $m$ colors,
 and the adjacency of any two vertices $x$ and $y$ is fully determined by the colors of $x,y$ and $x\wedge y$. (Notice that there is no restriction on the colors used for internal elements of $\mathbf{T}$.) In this setting we prove (Theorem~\ref{thm:limco}):

\begin{itemize}
\item every left-convergent sequence of $m$-partite cographs has a Borel limit, which is the interpretation of an atomless Borel colored {\ato};
\item conversely, every interpretation of an atomless Borel colored {\ato} is the left-limit of a sequence of $m$-partite cographs.
\end{itemize}

The class of all finite $m$-partite cographs can be characterized by means of a finite family $\mathcal{F}_m$ of excluded induced subgraphs \cite{Ganian2012,Ganian2013}. We prove that this characterization extends to Borel graphs (Theorem~\ref{thm:char}) in the sense that 
an atomless Borel graph excludes all graphs in $\mathcal{F}_m$ as induced subgraphs if and only if it is the interpretation of an atomless colored {\mato}.

\section{Preliminaries}

%
%
In a finite \ato{}, each element $x$ except the minimum has a unique predecessor, that we call the {\em father} of $x$ (as it is the father of $x$ in the associated tree).

For a {\ato} $\mathbf T$ and an element $v\in T$ we further define
\begin{align*}
 T_v &= \{u \in T; u \geq v\}\\
F_v &=\{u\text{ minimal in } T_v\setminus\{v\}\}.
\end{align*}

  Let $\mathbf{T}$ be a \ato{}, and let $u_1, \dots, u_p \in T$. The {\em sub-{\ato}}
   $\mathbf{T}\langle u_1, \dots, u_p\rangle$  of $\mathbf T$ {\em generated} by $u_1,\dots,u_p$ is the {\ato} with elements
  $$\Bigl\{ \bigwedge_{i \in I} u_i ; \quad \emptyset \neq I \subseteq
  \{1, \dots, p\} \Bigr\},$$
  where $\wedge$ is defined as the restriction of the $\wedge$ function
  of $\mathbf T$  to the domain of  $\mathbf{T}\langle u_1, \dots, u_p\rangle$. 
  \begin{remark}
  \label{rem:gato}
  Condition (2) of the definition of a {\ato} can be replaced by condition:
  \begin{equation*}
  \forall x,y,z\in T\qquad |\{x\wedge y, x\wedge z,y \wedge z\}|\leq 2.
  \end{equation*}
     \end{remark}

 It follows that for $u_1, \dots, u_p \in T$, the sub-{\ato}
   $\mathbf{T}\langle u_1, \dots, u_p\rangle$  of $\mathbf T$  generated by $u_1,\dots,u_p$ has domain
   $$
   T\cup\{u_i\wedge u_j: 1\leq i< j\leq p\}.
   $$
   

If $\phi\in{\rm QF}_p$ is any quantifier-free formula with $p$ free variables (in the language of {\ato}) and $\mathbf{T}$ is a {\mato} then $\phi(\mathbf T)$ is a Borel subset of $T^p$, thus any (Borel) probability measure $\mu$ on $\mathbf{T}$ allows to define 
 $$\langle \phi,(\mathbf{T},\mu)\rangle=\mu^{\otimes p}(\phi(\mathbf{T})).$$

  Let $\mathbf{T}_1,\mathbf{T}_2$ be \mato{}s, and let $\mu_1$ and $\mu_2$ be probability measures on $T_1$ and $T_2$, respectively.
We define  the pseudometric
$${\rm dist}((\mathbf{T}_1, \mu_1), (\mathbf{T}_2, \mu_2)) =
  \sum_{p \geq 1} 2^{-p}\,  \sup_{\phi\in{\rm QF}_p} \left| \langle
  \varphi_p, (\mathbf{T}_1, \mu_1) \rangle - \langle \varphi_p, (\mathbf{T}_2,
  \mu_2) \rangle \right|.$$
Note that a sequence of {\mato}s
is  ${\rm QF}$-convergent if and only if it is Cauchy for the above distance.

As mentioned in Section~\ref{sec:res}, 
the color of an element $v$ of a  colored \ato{} is the set $c(v)$ of the indices of those unary relations $v$ belongs to.
 The order of the relations naturally induces a total order on these colors:
 for distinct  $c_1,c_2\subseteq \{1,\dots,k\}$ it holds $c_1<c_2$ if
 $\min(c_1\setminus c_2) <\min(c_2\setminus c_1)$ (with convention
 $\min\emptyset=0$).

%

\section{Sampling and Approximating}
%
Two Borel structures $\mathbf{A},\mathbf{B}$ are {\em QF-equivalent} if 
$\langle\phi,\mathbf{A}\rangle=\langle\phi,\mathbf{B}\rangle$ holds for every quantifier free formula $\phi$.
The following lemma, which is trivial for uniform structures, requires some little work for structures with a probability measure. As it this result is not really needed here, we 
leave the proof to the reader.
\begin{lemma}
Two finite structures are QF-equivalent if and only if they are isomorphic.
\end{lemma}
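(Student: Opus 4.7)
The forward direction I would dispose of in a sentence: any structure- and measure-preserving isomorphism $f\colon\mathbf{A}\to\mathbf{B}$ sends $\phi(\mathbf{A})$ bijectively onto $\phi(\mathbf{B})$ while pushing $\mu_A^{\otimes p}$ forward to $\mu_B^{\otimes p}$, so $\langle\phi,\mathbf{A}\rangle=\langle\phi,\mathbf{B}\rangle$ for every quantifier free $\phi$. For the converse, I would first observe that elements of measure zero contribute $0$ to every pairing $\langle\phi,\cdot\rangle$ and may therefore be discarded, so without loss of generality both $\mathbf{A}$ and $\mathbf{B}$ are fully supported. The overall plan is then: (i) write down a single quantifier free formula encoding the isomorphism type of $\mathbf{A}$; (ii) resolve the individual point-masses by forming a whole family of moment-like variants of that formula; (iii) upgrade the resulting embedding into a measure-preserving isomorphism.

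Concretely, enumerate $A=\{a_1,\dots,a_n\}$ with $w_j=\mu_A(a_j)>0$, and let $\Phi_{\mathbf{A}}(y_1,\dots,y_n)$ be the quantifier free \emph{diagram} of $\mathbf{A}$: the $y_j$ are pairwise distinct, carry the same colors as the $a_j$, and satisfy $y_i\wedge y_j=y_k$ whenever $a_i\wedge a_j=a_k$ in $\mathbf{A}$. Its realizations in any structure correspond exactly to the embeddings $a_j\mapsto y_j$ of $\mathbf{A}$. For each exponent vector $(k_1,\dots,k_n)\in\bbbn^n$ with every $k_j\geq 1$, I would form $\Phi_{\mathbf{A}}^{(k_1,\dots,k_n)}$ from $\Phi_{\mathbf{A}}$ by replacing each variable $y_j$ by $k_j$ copies forced to coincide. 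A direct computation then yields
$$\langle\Phi_{\mathbf{A}}^{(k_1,\dots,k_n)},\mathbf{A}\rangle=\sum_{\sigma\in\mathrm{Aut}(\mathbf{A})}\prod_{j=1}^{n} w_{\sigma(j)}^{k_j}\quad\text{and}\quad\langle\Phi_{\mathbf{A}}^{(k_1,\dots,k_n)},\mathbf{B}\rangle=\sum_{f\colon\mathbf{A}\hookrightarrow\mathbf{B}}\prod_{j=1}^{n} \mu_B(f(a_j))^{k_j}.$$

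QF-equivalence forces these two quantities to agree for every exponent vector. Interpreting them as the multivariate moments of two finitely supported measures on $[0,1]^n$, the uniqueness of the (multivariate Hausdorff) moment problem --- equivalently, the density of polynomials in $C([0,1]^n)$ via Stone--Weierstrass --- forces the finite multisets of weight-vectors $\{(w_{\sigma(j)})_{j=1}^{n}:\sigma\in\mathrm{Aut}(\mathbf{A})\}$ and $\{(\mu_B(f(a_j)))_{j=1}^{n}:f\in\mathrm{Emb}(\mathbf{A},\mathbf{B})\}$ to coincide. In particular some embedding $f\colon\mathbf{A}\hookrightarrow\mathbf{B}$ exists, and $\sum_j\mu_B(f(a_j))=\sum_j w_j=1=\mu_B(B)$; full support of $\mathbf{B}$ then forces $f(A)=B$, so $f$ is a structural isomorphism. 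Composing with the automorphism of $\mathbf{A}$ witnessing the coordinatewise matching of weight-vectors turns $f$ into a measure-preserving isomorphism.

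The only step with any real content is the moment step. In the uniform case, one has $\langle\Phi_{\mathbf{A}},\mathbf{A}\rangle>0$ by itself, so QF-equivalence immediately produces an embedding that is bijective for cardinality reasons. As soon as the measure is non-uniform, no single formula can separate elements of the same structural role but distinct weight, which is exactly why I am forced to commit to the whole family $\Phi_{\mathbf{A}}^{(k_1,\dots,k_n)}$ and to invoke multivariate moment uniqueness --- this is the main technical hurdle and precisely the ``little work'' mentioned in the statement.
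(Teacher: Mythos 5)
The paper does not actually supply a proof of this lemma (it is explicitly ``left to the reader''), so there is no paper argument to compare against. Your overall strategy --- encode the isomorphism type by a quantifier-free diagram $\Phi_{\mathbf A}$, then recover the weights by varying exponent vectors and invoking moment uniqueness --- is a sensible one, and the moment step itself is sound once one notes that all relevant coordinates lie in $(0,1]$, so that the restriction $k_j\geq 1$ is harmless (equality of the moments of $\prod_j x_j\,\mathrm d\nu$ suffices to conclude $\nu_P=\nu_Q$).

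There is, however, a genuine gap in your opening reduction. You claim that ``elements of measure zero contribute $0$ to every pairing $\langle\phi,\cdot\rangle$ and may therefore be discarded,'' but this is false for signatures with function symbols, which is exactly the setting of the paper. A tuple $(v_1,\dots,v_p)$ of support elements can satisfy or fail $\phi$ depending on the values of terms such as $v_i\wedge v_j$, and those values may well be measure-zero elements; indeed, in the tree-semilattices arising from $m$-partite cographs the internal nodes all carry measure zero, yet they are precisely what $\phi$ queries when it mentions $x_i\wedge x_j$. So the support is typically not a substructure, and deleting the zero-measure elements changes $\langle\phi,\cdot\rangle$. As written, your appeal to ``fully supported'' structures is therefore not a legitimate WLOG, and the subsequent diagram $\Phi_{\mathbf A}$ with one free variable per element of $A$ then collapses: giving every $a_j$ with $w_j=0$ an exponent $k_j\geq 1$ annihilates both sides, so your moment identities become $0=0$ and carry no information.

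The idea can be salvaged, but it takes a genuine modification rather than a remark: first pass to the substructure generated by the support (discarding only the elements that are not reachable by terms in support elements --- these really are invisible to $\langle\phi,\cdot\rangle$), and then build $\Phi_{\mathbf A}$ with free variables only for the $m$ support elements, representing each zero-measure element of the generated substructure by an explicit term in those variables (for a {\ato}, a $\wedge$-expression). With that repair, the exponent vectors range over $\bbbn_{\geq 1}^m$, every weight is strictly positive, and your moment-uniqueness and mass-counting steps go through, producing a structure isomorphism that is measure-preserving after composing with a suitable automorphism. You should also make this pre-processing explicit for $\mathbf{B}$ as well, so that at the end surjectivity of the resulting embedding follows from ``image of the support carries full mass'' together with ``$\mathbf{B}$ is generated by its support.''
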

%

\begin{definition}
Let $\mathbf{A}$ be a Borel structure, and let $n,L\in\bbbn$.
The {\em $(n,L)$-sampling} of $\mathbf{A}$ is the random structure $\mathbf{B}_{X_1,\dots,X_n}$ defined as follows:
\begin{itemize}
  \item the domain of $\mathbf{B}_{X_1,\dots,X_n}$ is the union of sets $\Omega, \Gamma$, and $\{\bot\}$, where $\Omega=\{X_1,\dots,X_n\}$ is a set of $n$ random independent elements of $\mathbf{A}$ sampled with probability $\mu_{\mathbf{A}}$,  $\Gamma$ is the set of all the elements that can be obtained from  $\Omega$ by at most $L$ applications of a function, and $\bot$ is an additional element;
  \item the relations  are defined on $\mathbf{B}_{X_1,\dots,X_n}$ as in $\mathbf{A}$, as well as functions when the image belongs to $\Omega\cup\Gamma$.
  When undefined, functions have image $\bot$;
  \item the probability measure on $\mathbf{B}_{X_1,\dots,X_n}$ assigns probability $1/n$ to $X_1,\dots,X_n$, and probability $0$ to the other elements.
\end{itemize}
\end{definition}

\begin{lemma}[\cite{mcdiarmid1989method}]
\label{lem:mcd}
Let $X_1,\dots,X_n$ be independent random variables, with $X_k$ taking values in a set $A_k$ for each $k$. Suppose that a (measurable) function $f:\prod A_k\rightarrow\mathbb{R}$ satisfies
$$|f(\mathbf{x})-f(\mathbf{x}')|\leq c_k$$
whenever the vectors $\mathbf{x}$ and $\mathbf{x}'$ differ only in the $k$th coordinate. Let $Y$ be the random variable $f[X_1,\dots,X_n]$. Then for any $t>0$,
$${\rm Pr}(|Y-\mathbb{E}Y|\geq t)\leq 2e^{-2t^2/\sum c_k^2}.$$
\end{lemma}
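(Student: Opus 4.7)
The plan is to prove this via the standard Doob martingale argument combined with Hoeffding's lemma, which is the classical route to what is usually called the \emph{bounded differences inequality}. Write $Y = f(X_1, \dots, X_n)$ and introduce the martingale $Y_k = \mathbb{E}[Y \mid X_1, \dots, X_k]$ with respect to the natural filtration, so that $Y_0 = \mathbb{E}Y$ and $Y_n = Y$. The goal then reduces to controlling the telescoping sum $Y - \mathbb{E}Y = \sum_{k=1}^n D_k$, where $D_k = Y_k - Y_{k-1}$ is a martingale difference sequence.

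The first real step is to show that each $D_k$ is a bounded random variable when conditioned on $X_1, \dots, X_{k-1}$. Concretely, define
$$U_k = \sup_x \mathbb{E}[Y \mid X_1, \dots, X_{k-1}, X_k = x] - \inf_x \mathbb{E}[Y \mid X_1, \dots, X_{k-1}, X_k = x].$$
The bounded differences hypothesis $|f(\mathbf{x}) - f(\mathbf{x}')| \leq c_k$ implies, by integrating out the remaining independent coordinates $X_{k+1}, \dots, X_n$, that $U_k \leq c_k$ almost surely; consequently $D_k$ lies in a (random) interval of length $c_k$, conditional on the past.

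Next apply Hoeffding's lemma: for any random variable $Z$ with $\mathbb{E}Z = 0$ taking values in an interval of length $\ell$, one has $\mathbb{E}[e^{\lambda Z}] \leq e^{\lambda^2 \ell^2/8}$. Applied conditionally and combined with the tower property, this gives
$$\mathbb{E}\bigl[e^{\lambda(Y - \mathbb{E}Y)}\bigr] \leq \exp\Bigl(\tfrac{\lambda^2}{8}\sum_{k=1}^n c_k^2\Bigr).$$
A standard Chernoff-style argument (Markov's inequality applied to $e^{\lambda(Y - \mathbb{E}Y)}$, then optimizing $\lambda = 4t/\sum c_k^2$) yields $\Pr(Y - \mathbb{E}Y \geq t) \leq e^{-2t^2/\sum c_k^2}$, and a symmetric argument applied to $-f$ handles the lower tail; union-bounding gives the factor of $2$.

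The main technical subtlety is justifying the conditional bound on $D_k$, since it requires writing the conditional expectations as integrals against the product law of the remaining independent variables and exploiting measurability of $f$; one must also ensure the suprema and infima above are attained (or approximated) in a measurable way. Everything else is bookkeeping with moment generating functions. Since this is a standard result quoted from \cite{mcdiarmid1989method}, no further argument is given in the paper.
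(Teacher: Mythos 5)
The paper does not prove this lemma; it is quoted verbatim (with citation) from McDiarmid's survey as a known black box. Your Doob-martingale plus Hoeffding's-lemma argument is the standard and correct derivation of the bounded differences inequality --- in fact it is essentially the argument given in the cited source itself --- and the constant bookkeeping (length-$c_k$ conditional interval, $\lambda^2 c_k^2/8$ per step, optimizing $\lambda = 4t/\sum c_k^2$) all checks out, so there is nothing to correct.
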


\begin{lemma}
\label{lem:samp}
Let $\phi$ be a quantifier-free formula $\phi$ with at most $p$ free variables and at most $L$ functional symbols.

Then, for every Borel structure $\mathbf{A}$ and every $\epsilon>0$ and $n>2/\epsilon$, it holds
\begin{equation*}
{\rm Pr}(|\langle\phi,\mathbf{A}\rangle - \langle\phi,\mathbf{B}_{X_1,\dots,X_n}\rangle|\geq \epsilon)\leq
2e^{-\frac{(\epsilon n-2)^2}{p^2n}},
\end{equation*}
where  
$\mathbf{B}_{X_1,\dots,X_n}$ is the $(n,L)$-sampling of $\mathbf{A}$.
\end{lemma}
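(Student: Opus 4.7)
The plan is to regard
\[
Y \;=\; \langle \phi, \mathbf{B}_{X_1,\dots,X_n}\rangle \;=\; \frac{1}{n^p}\sum_{(i_1,\dots,i_p)\in [n]^p} \mathbf{1}[\mathbf{B}_{X_1,\dots,X_n}\models \phi(X_{i_1},\dots,X_{i_p})]
\]
as a function of the independent random variables $X_1,\dots,X_n$, to estimate its bias relative to $\langle \phi,\mathbf{A}\rangle$, and then to invoke McDiarmid's inequality (Lemma~\ref{lem:mcd}).

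The first step is to reduce the evaluation of $\phi$ on the random sampled structure to an evaluation inside $\mathbf{A}$. Since $\phi$ contains at most $L$ function symbols, every term appearing in $\phi(X_{i_1},\dots,X_{i_p})$ has depth at most $L$; by the very definition of $\Gamma$, evaluating such a term starting from $\Omega$ stays inside $\Omega\cup\Gamma$ and agrees with its evaluation in $\mathbf{A}$. In particular the auxiliary symbol $\bot$ never occurs, so each summand above equals $\mathbf{1}[\mathbf{A}\models \phi(X_{i_1},\dots,X_{i_p})]$.

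The second step is a bias estimate. For any tuple $(i_1,\dots,i_p)\in [n]^p$ with pairwise distinct entries, the $X_{i_j}$ are i.i.d.\ $\mu_{\mathbf{A}}$-distributed, so the summand has expectation exactly $\langle \phi,\mathbf{A}\rangle$. The number of tuples with a repetition is at most $n^p - n(n-1)\cdots(n-p+1)\le \binom{p}{2}n^{p-1}$, and for those tuples the indicator is trivially bounded by $1$; a short computation therefore gives $|\mathbb{E}\,Y - \langle \phi,\mathbf{A}\rangle|$ of order $1/n$, which is what is absorbed into the $\epsilon n - 2$ in the statement.

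The third step verifies the bounded-differences hypothesis. If one coordinate $X_k$ is modified while the others remain fixed, only the summands whose tuple contains the index $k$ are affected; their number is $n^p - (n-1)^p \le p\,n^{p-1}$, and each indicator changes by at most $1$, so $Y$ varies by at most $c_k = p/n$, whence $\sum_{k=1}^n c_k^2 = p^2/n$.

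Finally one combines the bias estimate with Lemma~\ref{lem:mcd} applied with threshold $t = \epsilon - 2/n$, positive by the hypothesis $n>2/\epsilon$:
\[
\Pr\bigl(|Y - \langle \phi,\mathbf{A}\rangle|\ge \epsilon\bigr) \;\le\; \Pr\bigl(|Y - \mathbb{E}\,Y|\ge \epsilon - 2/n\bigr) \;\le\; 2\exp\!\Bigl(-\tfrac{(\epsilon n - 2)^2}{p^2 n}\Bigr).
\]
The only mildly delicate point is the accounting of constants in the bias to match the exponent in the stated bound; the conceptual content of the lemma is that a single McDiarmid application on $Y$ suffices, everything else being routine bookkeeping.
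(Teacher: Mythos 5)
Your proof follows the same strategy as the paper's: write $Y=\langle\phi,\mathbf{B}_{X_1,\dots,X_n}\rangle$ as a function of the sampled points, reduce evaluation in $\mathbf{B}$ to evaluation in $\mathbf{A}$, bound $|\mathbb{E}Y-\langle\phi,\mathbf{A}\rangle|$, verify the bounded-differences hypothesis with $c_k=p/n$, and apply McDiarmid's inequality with threshold $t=\epsilon-2/n$. Your first step --- observing that the depth bound $L$ guarantees every term stays inside $\Omega\cup\Gamma$, so $\bot$ never appears and the evaluation in $\mathbf{B}_{X_1,\dots,X_n}$ agrees with the one in $\mathbf{A}$ --- is a welcome explicit justification of a point the paper leaves silent when it writes $\langle\phi,\mathbf{B}_{X_1,\dots,X_n}\rangle=h(X_1,\dots,X_n)$.

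One shared imprecision: you say the bias is ``of order $1/n$'' and defer the constant to ``routine bookkeeping,'' and the paper asserts $|\langle\phi,\mathbf{A}\rangle-\mathbb{E}Y|<2/n$; in fact the argument only gives a bound of $1-\ff{n}{p}/n^p\leq\binom{p}{2}/n$ (or twice this if one follows the paper's two-term splitting), which exceeds $2/n$ as soon as $p\geq 3$. So the clean ``$\epsilon n-2$'' in the exponent as stated really hides a $p$-dependence in the bias that neither you nor the paper track; this is harmless for Corollary~\ref{cor:approx}, which already allows an unspecified $R(p,L)$, but it deserves an honest constant rather than being absorbed silently.
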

\begin{proof}
Let   $n\geq p$ and let
$f$ be the indicator function of $\phi(\mathbf{A})$. Then
\begin{align*}
\langle\phi,\mathbf{A}\rangle&=\idotsint f(x_1,\dots,x_p)\,{\rm d}x_1\dots{\rm d}x_p\\
&=\frac{1}{\ff{n}{p}}\idotsint \sum_{g\in {\rm Inj}([p],[n])}f(x_{g(1)},\dots,x_{g(p)})\,{\rm d}x_1\dots{\rm d}x_n,\\
\end{align*}
where $\ff{n}{p}=n(n-1)\dots(n-p+1)$ is the Pochhammer symbol.

Let
$$h(x_1,\dots,x_n)=\frac{1}{n^p}\sum_{g\in [n]^{[p]}} f(x_{g(1)},
\dots,x_{g(p)}).$$

Then it holds
$$\langle{\phi},\mathbf{B}_{X_1,\dots,X_n}\rangle=h(X_1,\dots,X_n).$$

Considering the expectation  we get
$$
\mathbb{E}h(X_1,\dots,X_n)=\frac{1}{n^p}\idotsint\sum_{g\in [n]^{[p]}} f(x_{g(1)},\dots,x_{g(p)})\,{\rm d}x_1\dots{\rm d}x_n.
$$
So we have
\begin{align*}
|\langle\phi,\mathbf{A}\rangle-\mathbb{E}h(X_1,\dots,X_n)|
&\leq \biggl(1-\frac{\ff{n}{p}}{n^p}\biggr)\langle\phi,\mathbf{A}\rangle+\biggl|\frac{\ff{n}{p}}{n^p}\langle\phi,\mathbf{A}\rangle-\mathbb{E}h(X_1,\dots,X_n)\biggr|\\
&\leq
\biggl(1-\frac{\ff{n}{p}}{n^p}\biggr)\langle\phi,\mathbf{A}\rangle+\frac{|[n]^{[p]}-{\rm Inj}([p],[n])|}{n^p}\\
&<\frac{2}{n}
\end{align*}

Now remark that for every $x_1,\dots,x_n,\hat{x_i}$ it holds
$$
|h(x_1,\dots,x_i,\dots,x_n)-h(x_1,\dots,\hat{x_i},\dots,x_n)|\leq\frac{p}{n},
$$
as $p/n$ bounds the probability that an mapping from $[p]$ to $[n]$ will map some value to $i$. Thus, according to Lemma~\ref{lem:mcd} it holds for any $t>0$:
$$
{\rm Pr}(|h(X_1,\dots,X_n)-\mathbb{E}h(X_1,\dots,X_n)|\geq t)\leq 2e^{-\frac{2nt^2}{p^2}}.
$$

In particular, for $t=\epsilon-2/n$ it holds
\begin{align*}
&{\rm Pr}(|\langle\phi,\mathbf{A}\rangle - \langle\phi,\mathbf{B}_{X_1,\dots,X_n}\rangle|\geq \epsilon)\\
&\qquad\leq
{\rm Pr}(|\mathbb{E}h(X_1,\dots,X_n)-h(X_1,\dots,X_n)\rangle|\geq  \epsilon-2/n)\\
&\qquad\leq 2e^{-\frac{(\epsilon n-2)^2}{p^2n}}.
\end{align*}
\end{proof}

By union bound, we deduce that for sufficiently large $n$ there exists an $(n,L)$-sampling which has $\epsilon$-close Stone pairing with any formula with at most $p$ free variables and at most $L$ functional symbols. Precisely, we have:

\begin{corollary}
\label{cor:approx}
For every signature $\sigma$
there exists a function $R:\bbbn\times\bbbn\rightarrow\bbbn$ with the following property:

For every Borel $\sigma$-structure $\mathbf{A}$, every $\epsilon>0$ and every $L\in\bbbn$ there exists, for each $n\geq R(p,L)\,\epsilon^{-2}$ an $(n,L)$-sampling $\mathbf B$ of $\mathbf{A}$ such that for every 
formula $\phi$ with at most $p$ free variables and $L$ functional symbols it holds
$$
|\langle\phi,\mathbf{A}\rangle-\langle\phi,\mathbf{B}\rangle|<\epsilon.
$$
\end{corollary}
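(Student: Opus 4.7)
The plan is to upgrade the single-formula concentration bound of Lemma~\ref{lem:samp} to a uniform bound over the whole fragment of quantifier-free formulas with at most $p$ free variables and at most $L$ functional symbols, by means of a union bound. The crucial preliminary step is that, although this fragment is infinite syntactically, it contains only finitely many logical equivalence classes, which allows the union bound to be taken over a finite set.

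First I would bound the number of equivalence classes. Since $\sigma$ has finitely many function and relation symbols, the set of terms that can be built from variables $x_1,\dots,x_p$ with at most $L$ function-symbol occurrences is finite, and hence so is the set $\mathcal{A}(p,L)$ of atomic formulas in our fragment; denote its cardinality by $N_0(p,L)$. Every quantifier-free formula with at most $p$ free variables and at most $L$ function symbols is a Boolean combination of elements of $\mathcal{A}(p,L)$, so it is logically equivalent to one of at most $N(p,L):=2^{2^{N_0(p,L)}}$ formulas. Fixing a set $\mathcal{F}(p,L)$ of representatives, it suffices (since logical equivalence preserves the Stone pairing) to control the failure event only for the finitely many $\phi\in\mathcal{F}(p,L)$.

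For $n\geq 2/\epsilon$, Lemma~\ref{lem:samp} together with the union bound then yields
\[
\Pr\bigl(\exists\phi\in\mathcal{F}(p,L):\ |\langle\phi,\mathbf{A}\rangle-\langle\phi,\mathbf{B}_{X_1,\dots,X_n}\rangle|\geq\epsilon\bigr)\leq 2N(p,L)\,e^{-(\epsilon n-2)^2/(p^2 n)}.
\]
Whenever this quantity is strictly less than $1$, at least one realization of the $(n,L)$-sampling has the property claimed in the corollary. Using the elementary estimate $(\epsilon n-2)^2/n\geq \epsilon^2 n/4$, valid as soon as $n\geq 4/\epsilon$, a sufficient condition is $n\geq 4p^2\log(2N(p,L))\,\epsilon^{-2}$, so one takes $R(p,L)$ to be any integer majorizing $\max(16,\,4p^2\log(2N(p,L)))$. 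The only genuine content lies in the reduction to finitely many equivalence classes, which is where finiteness of $\sigma$ is used; once that is in place, the remaining argument is a routine concentration-plus-union-bound computation whose $\epsilon^{-2}$ scaling comes directly from the linear-in-$n$ growth of the exponent in Lemma~\ref{lem:samp}.
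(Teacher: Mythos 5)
Your proposal is correct and takes essentially the same route as the paper, which simply cites ``by union bound'' and leaves the details implicit; you supply the one non-routine ingredient, namely that the fragment of quantifier-free formulas with at most $p$ free variables and at most $L$ function-symbol occurrences has only finitely many logical equivalence classes (under the standing assumption that $\sigma$ is finite), which is what makes the union bound over a finite index set and fixes the $\epsilon^{-2}$ scaling of $R$.
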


Hence we have:
\begin{corollary}
\label{cor:wsamp}
Every Borel $\sigma$-structure is the limit of a sequence of weighted finite $\sigma$-structures.
\end{corollary}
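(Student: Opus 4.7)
The plan is to deduce the corollary directly from Corollary~\ref{cor:approx} via a diagonal construction. First I would verify that every $(n,L)$-sampling is itself a weighted finite $\sigma$-structure: its domain $\Omega\cup\Gamma\cup\{\bot\}$ is finite because $\Omega$ has $n$ elements, $\Gamma$ is obtained by applying terms of depth at most $L$ in the (finitely many, finitely-ary) function symbols of $\sigma$ to tuples from $\Omega$, and $\bot$ is a single additional element; the accompanying probability measure $\frac{1}{n}\sum_{i=1}^{n}\delta_{X_i}$ is built into the definition. Thus every realization of the sampling already lies in the target class.

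For each $k\in\bbbn$, I would then apply Corollary~\ref{cor:approx} with $p=L=k$ and $\epsilon=1/k$ to obtain an integer $n_k\geq R(k,k)\,k^2$ and an $(n_k,k)$-sampling $\mathbf{B}_k$ of $\mathbf{A}$ satisfying
\[
|\langle\phi,\mathbf{A}\rangle-\langle\phi,\mathbf{B}_k\rangle|<1/k
\]
for every quantifier-free formula $\phi$ with at most $k$ free variables and at most $k$ functional symbols. Given any fixed quantifier-free $\phi$, say with $p$ free variables and $L$ functional symbols, for every $k\geq\max(p,L)$ the above bound applies, and hence $\langle\phi,\mathbf{B}_k\rangle\to\langle\phi,\mathbf{A}\rangle$ as $k\to\infty$. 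Since $\phi$ is arbitrary, the sequence $(\mathbf{B}_k)_{k\in\bbbn}$ is QF-convergent to $\mathbf{A}$.

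There is no real obstacle here, as all the substantive work is already inside Corollary~\ref{cor:approx}. The only subtle point I would flag is that a single sampling must uniformly handle every formula of bounded complexity. This is legitimate because, modulo logical equivalence, there are only finitely many quantifier-free formulas on a fixed finite signature with a bounded number of free variables and bounded term depth, which permits a finite union bound on the concentration inequality of Lemma~\ref{lem:samp} when constructing the function $R$ of Corollary~\ref{cor:approx}.
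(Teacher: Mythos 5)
Your proof is correct and follows the same diagonal argument the paper intends, which it merely signals with the phrase ``Hence we have'' after Corollary~\ref{cor:approx}. Your additional remark about the finiteness of quantifier-free formulas modulo logical equivalence (for a finite signature, bounded free variables, bounded term depth) correctly identifies the implicit hypothesis that makes the function $R$ in Corollary~\ref{cor:approx} well-defined.
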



Note that the finite weighted structures obtained as $(n,L)$-sampling of a Borel structure $\mathbf{A}$ usually have many elements with $0$ measure.
The problem of determining whether an infinite Borel structure $\mathbf A$ is the limit of a sequence of finite unweighted  structures is much more difficult. 
Note that we have some (easy) necessary conditions on $\mathbf{A}$:
\begin{itemize}
\item the domain $A$ is uncountable and the measure $\mu_{\mathbf{A}}$ is atomless;
\item for every definable functions $f,g:A^r\rightarrow A^r$, and every definable subset $X\subseteq A^r$ of the set of fixed points of $f\circ g$, the sets $X$ and $g(X)$ have the same measure.
\end{itemize}

The second condition can be seen as a simple generalization of the {\em intrinsic mass transport principle} of Benjamini and Schramm: a graphing indeed defines a purely functional structure, with $d$ functional symbols, each interpreted as a measure preserving involution. In this case, the existence for each graphing of a sequence of bounded degree graphs having the given graphing as its limits is the difficult and well-known Aldous-Lyons conjecture \cite{Aldous2006}. 
It turns out that one of the main difficulties of this problem concerns the expansion properties of the graphing. This leads naturally to first consider a weakened version we present now (for generalized structures):

Let $\mathbf A$ be a $\sigma$-structure and let $v_1,\dots,v_p\in A$. The
{\em substructure} of $\mathbf A$ {\em generated} by $v_1,\dots,v_p$ is the $\sigma$-structure, denoted $\mathbf A[v_1,\dots,v_p]$, whose domain is the smallest subset of $A$ including $\{v_1,\dots,v_p\}$ closed by all functional symbols of $\sigma$, with the same interpretation of the relations and functions symbols as $\mathbf A$.

We shall now prove that in the case of atomless Borel tree semi-lattices, the sampling techniques can be used to build arbitrarily good finite approximations, thus to build a converging sequence of finite tree semi-lattices with the given Borel semi-lattice as a limit.

\begin{theorem}
\label{thm:uapprox}
Every atomless \mato{} $(\mathbf{T}, \mu)$ is limit of a sequence of uniform finite \mato{}s.
\end{theorem}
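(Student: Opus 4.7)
My plan is to build, for each desired approximation level, a finite uniform {\ato} by first sampling $n$ points of $(\mathbf T,\mu)$ to form the sub-{\ato} they generate, and then \emph{blowing up} each sample into a cluster of pendant children so as to convert the atomic weighted measure into a uniform one while preserving quantifier-free statistics. The intuition is that, within the atomless Borel structure, the measure attached to a sampled point $X_i$ can be represented by a cluster of $k+1$ formally indistinguishable leaves (namely $X_i$ together with $k$ new leaves attached just below it), since the semilattice operation between distinct elements belonging to different clusters agrees with $X_i\wedge X_{i'}$ as computed in $\mathbf T$.

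Concretely, for complexity parameters $p,L\in\bbbn$ and error $\epsilon>0$, I would sample $X_1,\dots,X_n$ iid from $\mu$ and form the sub-{\ato} $\mathbf S=\mathbf T\langle X_1,\dots,X_n\rangle$, which has at most $2n-1$ elements. For $k\in\bbbn$ to be tuned, define $\mathbf U_{n,k}$ by adjoining to $\mathbf S$ new pendant elements $X_i^j$ ($i\le n$, $j\le k$), each attached as a new leaf below $X_i$, with $\wedge$ extended by $X_i^j\wedge y:=X_i\wedge y$ for $y\in\mathbf S$, $X_i^j\wedge X_{i'}^{j'}:=X_i\wedge X_{i'}$ when $i\ne i'$, and $X_i^j\wedge X_i^{j'}:=X_i$ for $j\ne j'$. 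The validity of $\mathbf U_{n,k}$ as a {\ato} follows at once from the chain-of-ancestors condition, since adjoining only new leaves cannot violate it. Equip $\mathbf U_{n,k}$ with the uniform probability measure on its at most $n(k+1)+(n-1)$ elements.

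The core of the argument is to compare $\langle\phi,\mathbf U_{n,k}\rangle$ directly with $\langle\phi,\mathbf T\rangle$ for each QF formula $\phi$ with at most $p$ variables and $L$ functional symbols. Sampling $u_1,\dots,u_p$ iid uniformly from $\mathbf U_{n,k}$, let $E$ denote the event that every $u_j$ is a pendant and that the $u_j$'s lie in pairwise distinct cones; then $\Pr[E]\ge 1-O(p/k)-O(p^2/n)$. On $E$ the projection $\pi(X_i^j):=X_i$ sends $(u_1,\dots,u_p)$ to a tuple $(X_{I_1},\dots,X_{I_p})$ of $p$ distinct samples, and each term of $\phi$ evaluates identically in $\mathbf U_{n,k}$ and in $\mathbf T$ applied to $(X_{I_1},\dots,X_{I_p})$: distinct-cone pendants cannot coincide and their pairwise meets are precisely the meets of their parents, while atomlessness of $\mu$ ensures that $\mathbf T$-samples are likewise distinct almost surely. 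Marginalizing over the random $X_i$'s one finds $\mathbb E[\mathbf 1_\phi\mid E]=\langle\phi,\mathbf T\rangle$, because conditionally on $E$ the projected tuple is $p$ iid $\mu$-samples by the symmetry of iid sampling. A McDiarmid concentration (Lemma~\ref{lem:mcd}) applied to the map $(X_1,\dots,X_n)\mapsto\langle\phi,\mathbf U_{n,k}\rangle$---whose bounded differences are $O(p/n)$ in each coordinate---then yields $|\langle\phi,\mathbf U_{n,k}\rangle-\langle\phi,\mathbf T\rangle|\le O(p/k+p^2/n)+t$ with probability $\ge 1-2e^{-2nt^2/p^2}$.

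A diagonal argument finishes the proof: for each $m\in\bbbn$, a union bound over the finitely many QF-types of formulas with at most $m$ free variables and $m$ functional symbols, combined with sufficiently large $n_m,k_m$, exhibits a realization making $\mathbf U_m:=\mathbf U_{n_m,k_m}$ satisfy $|\langle\phi,\mathbf U_m\rangle-\langle\phi,\mathbf T\rangle|<1/m$ for all such $\phi$ simultaneously, so that $\mathbf U_m\to(\mathbf T,\mu)$ in the QF pseudometric. I expect the main technical hurdle to be the comparison step, because the three candidate measures---the concentrated weighted measure of the $(n,L)$-sampling $\mathbf B$ from Corollary~\ref{cor:approx}, the atomless $\mu$ on $\mathbf T$, and the uniform measure on $\mathbf U_{n,k}$---behave very differently on equality-sensitive formulas (for instance $\phi(x,y)=(x=y)$ has pairing $1/n$ under $\mathbf B$ but tends to $0$ in both $\mathbf T$ and $\mathbf U_{n,k}$). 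One therefore cannot simply chain $\mathbf U_{n,k}\sim\mathbf B\sim\mathbf T$ through the weighted sampling; the ``distinct cone'' coupling must compare $\mathbf U_{n,k}$ directly with $\mathbf T$, and care must be taken so that the error estimates are uniform over all QF formulas of bounded complexity.
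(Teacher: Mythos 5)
Your overall plan coincides with the paper's: sample $n$ points, take the generated sub-\ato{}, and then inflate each sampled vertex into a cluster so that the discrete weighted measure becomes uniform. However, the specific inflation you use --- attaching $k$ new pendant leaves $X_i^j$ as a \emph{star} below each $X_i$ --- does not work, because it destroys the order relation between distinct samples. Concretely, if $X_i < X_{i'}$ in $\mathbf T$ then in your $\mathbf U_{n,k}$ one has $X_i^a\wedge X_{i'}^b = X_i\wedge X_{i'}=X_i$, which is a vertex of $\mathbf S$ and not the pendant $X_i^a$, so $X_i^a$ and $X_{i'}^b$ are \emph{incomparable}. Your key claim that ``each term of $\phi$ evaluates identically in $\mathbf U_{n,k}$ and in $\mathbf T$ applied to $(X_{I_1},\dots,X_{I_p})$'' therefore fails for atomic formulas that compare a variable to a meet of variables: for $\phi(x_1,x_2)\equiv (x_1\wedge x_2 = x_1)$ and distinct-cone pendants, $\phi$ is always false in $\mathbf U_{n,k}$, while in $\mathbf T$ it holds whenever $X_{I_1}\leq X_{I_2}$. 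Taking $\mathbf T=[0,1]$ with $\wedge=\min$ and $\mu$ Lebesgue, this gives $\langle\phi,\mathbf T\rangle = 1/2$ but $\langle\phi,\mathbf U_{n,k}\rangle\to 0$, so your approximants do not converge to $\mathbf T$.

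The paper avoids this by inflating each sampled vertex $X_i$ into a \emph{chain} $(X_i,1)>(X_i,2)>\dots>(X_i,C)$ rather than a star, with $(x,i)\wedge(y,j)=(x,i)$ when $x\wedge y=x\ne y$. This makes the projection $(x,i)\mapsto x$ preserve the partial order, so for a tuple of copies lying in distinct chains the induced sub-\ato{} is isomorphic to the one generated by the projected tuple, and all QF formulas (including order-type atoms) evaluate identically. Replacing your stars with chains is essentially the fix, after which the rest of your argument (conditioning on distinctness of cones, McDiarmid concentration over the sample, union bound over finitely many QF-types, diagonalization) can be carried through much as the paper does. One secondary remark: your stated worry that one ``cannot simply chain $\mathbf U_{n,k}\sim\mathbf B\sim\mathbf T$ through the weighted sampling'' is unfounded --- the paper does exactly this chaining, and it works because $\langle x=y,\mathbf B\rangle = 1/n\to 0=\langle x=y,\mathbf T\rangle$; there is no obstruction from equality-sensitive formulas.
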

\begin{proof}
Let $(\mathbf{T}_N, \mu_N)$ be an $(N, 1)$-sampling of $(\mathbf{T}, \mu)$ (note that in the context of \ato{}s, and according to Remark~\ref{rem:gato}, taking $L > 1$ yields the same structures as with $L = 1$).
  Let $M = |S| - N$, that is, the number of vertices of $\mathbf{T}_N$ that were not directly sampled. 

Fix $C\in\bbbn$ and let $[C]=\{1,\dots,C\}$.
Let $(\mathbf{\hat{T}}_N, \hat\mu_N)$ be the \mato{} with elements set $\hat{T}_N\subseteq
T_n\times [C]$ defined by:
$$\hat{T}_N=\{x\in T_N: \mu_N(\{x\})=0\}\times\{1\} \cup \{x\in T_N: \mu_N(\{x\})>0\}\times [C]$$
with meet operation defined by
$$
(x,i)\wedge (y,j)=\begin{cases}
(x\wedge y, 1)&\text{if }x\wedge y\notin\{x,y\}\\
(x,i)&\text{if }x\wedge y=x\text{ and }x\neq y\\
(y,j)&\text{if }x\wedge y=y\text{ and }x\neq y\\
(x,\max(i,j))&\text{if }x=y
\end{cases}
$$
and uniform measure $\hat\mu_N$.

Informally, $(\mathbf{\hat{T}}_N, \hat\mu_N)$ is
obtained from $(\mathbf{T}_N, \mu_N)$ by replacing each of the randomly selected elements used to create $\mathbf{T}_N$ 
 with a chain on $c$ vertices, and considering a uniform measure.

Define the map 
$$\pi : \hat{T}_N \to T_N,\quad (x,i)\mapsto x.$$
Note that for every quantifier free formula $\phi$ with $p$ free variables, it holds, for every distinct $u_1,\dots,u_p\in T_N$ and every $v_1\in\pi^{-1}(u_1),\dots, v_p\in\pi^{-1}(u_p)$ it holds:
$$
\mathbf{T}_N\models \phi(u_1,\dots,u_p)
\quad \iff\quad 
\mathbf{\hat{T}}_N\models \phi(v_1,\dots,v_p).
$$

 Let $\mathbf{L}$ be a \mato{}, $p$ an integer, and $X_1, \dots, X_p$ (resp. $Y_1, \dots, Y_p$) independent random variables in $T_N$ (resp. $\hat{T}_N$) (with $X_i(\omega)=\mu_N(\omega)$, resp.
 $Y_i(\omega)=\hat\mu_N(\omega)$). Let $\phi$ be a quantifier-free formula. As for any formula $\psi$ and any structure $\mathbf A$ it holds $\langle \phi,\mathbf A\rangle-\langle\phi\Land\psi,\mathbf{A}\rangle\leq \langle\neg\psi,\mathbf{A}\rangle$, we have

 
\begin{align*}
&\biggl|\Pr\Bigl(\mathbf{T}_N\models\phi(X_1,\dots,X_p)\Bigr)-
\Pr\Bigl(\mathbf{T}_N\models\phi(X_1,\dots,X_p) \Land \lnot\bigvee_{i \neq j} (X_i = X_j) \Bigr) \biggr|\\
&\quad \leq \Pr\Bigl(\bigvee_{1\leq i <j\leq p}(X_i=X_j)\Bigr)
 \leq \binom{p}{2} \Pr\Bigl(X_1=X_2\Bigr)
 \leq \frac{p^2}{2N}
\end{align*}
 
%

Similarly,  denoting $B$ the event 
$$\bigvee_{1\leq i<p}(\pi(Y_i)=\pi(Y_j))\,\vee\,\bigvee_{i=1}^p (\mu_N(\pi(Y_i))=0)$$
 it holds

\begin{align*}
&\biggl| \Pr\Bigl(\mathbf{\hat{T}}_N\models\phi(Y_1, \dots, Y_p)\Bigr) - \Pr\Bigl(\mathbf{T}_N\models\phi(Y_1, \dots, Y_p) \Land\lnot B \Bigr) \biggr|\\
 &\quad\leq \Pr(B)\leq \frac{pM}{CN+M}+\frac{p^2}{2N}
 \leq \frac{p}{N}\Bigl(\frac{M}{C}+\frac{p}{2}\Bigr)
\end{align*}

But, denoting by $I_{A}$ the indicator function of set $A$,  it holds:

\begin{align*}
&\Pr\Bigl(\mathbf{T}_N\models\phi(X_1,\dots,X_p) \Land\lnot \bigvee_{i \neq j} X_i = X_j \Bigr)\\
&\quad= \sum_{f\in{\rm Inj}([p],T_N)} \Bigl(\prod_{i=1}^p \mu_N(\{f(i)\})\Bigr)\, I_{\phi(\mathbf{T}_n)}(f(1), \dots, f(p))\\
&\quad=\frac{1}{N^p} \sum_{f\in{\rm Inj}([p],T_N)}  I_{\phi(\mathbf{T}_n)}(f(1), \dots, f(p))\\
&\quad=\frac{1}{(CN)^p} \sum_{f\in{\rm Inj}([p],T_N)}
\sum_{g\in[C]^{[p]}}  I_{\phi(\mathbf{T}_n)}(f(1), \dots, f(p))\\
&\quad=\frac{1}{(CN)^p} \sum_{f\in{\rm Inj}([p],T_N)}
\sum_{g\in[C]^{[p]}}  I_{\phi(\hat{\mathbf{T}}_n)}((f(1),g(1)), \dots, (f(p),g(p)))\\
&\quad=\frac{(CN+M)^p}{(CN)^p} 
 \Pr\Bigl(\mathbf{\hat{T}}_N \models\phi(Y_1,\dots,Y_p) \Land\lnot B \Bigr).
\end{align*}
Thus
\begin{align*}
&\biggl|\Pr\Bigl(\mathbf{T}_N\models\phi(X_1,\dots,X_p) \Land \lnot\bigvee_{i \neq j} X_i = X_j \Bigr)-
\Pr\Bigl(\mathbf{\hat{T}}_N \models\phi(Y_1,\dots,Y_p) \Land\lnot B \Bigr)
\biggr|\\
&\quad\leq \biggl(\frac{CN+M}{CN}\biggr)^p-1\leq \frac{pM}{CN}
\end{align*}

Altogether, we have:
$$
\biggl|\Pr\Bigl(\mathbf{T}_N\models\phi(X_1,\dots,X_p)\Bigr)-
\Pr\Bigl(\mathbf{\hat{T}}_N \models\phi(Y_1,\dots,Y_p)\Bigr)
\biggr|\leq \frac{p}{N}\Bigl(p+\frac{2M}{C}\Bigr).
$$

In other words, it holds
$$
\Bigl|\langle\phi,(\mathbf{T}_N,\mu_N)\rangle-
\langle\phi,\mathbf{\hat{T}}_N\rangle
\Bigr|\leq \frac{p}{N}\Bigl(p+\frac{2M}{C}\Bigr).
$$

Together with Corollary~\ref{cor:wsamp}, we get that for
every atomless Borel tree semi-lattice $(\mathbf{T},\mu)$ ,
every $\epsilon>0$ and every $p\in\bbbn$ there exists
a finite (unweighted) tree semi-lattice $\hat{\mathbf{T}}$  such that
for every quantifier free formula $\phi$ with $p$ free variables it holds
$$
\left|\langle\phi,\mathbf{T}\rangle-\langle\phi,\hat{\mathbf{T}}\rangle\right|<\epsilon,
$$
hence if we choose $p> -\log_2\epsilon$ it holds
 ${\rm dist}(\mathbf{T},\hat{\mathbf{T}})<\epsilon +2^{-p}< 2\epsilon$.

\end{proof}

\section{Limits of Tree-Semilattices}
In this section, we focus on providing an approximation lemma for finite colored \ato{}s, which can be seen as an analog of the weak version of Szemerédi's regularity lemma. 
For the sake of simplicity, in this section, by {\ato} we always mean
a finite weighted colored {\ato}.

\subsection{Partitions of {\ato}s}
\begin{definition}
  Let $(\mathbf{T}, \mu)$ be a {\ato} and let $\epsilon > 0$. Then $v
  \in T$ is said:
  \begin{itemize}
    \item {\em $\epsilon$-light} if $\mu(T_v) < \epsilon$;
    \item {\em $\epsilon$-singular} if $\mu(T_v') \geq \epsilon$ where
      $T_v'$ is the set $T_v$ minus the sets $T_u$ for
      non-$\epsilon$-light children $u$ of $v$;
    \item {\em $\epsilon$-chaining} if $v$ is not singular and has
      exactly $1$ non-light child;
    \item {\em $\epsilon$-branching} if $v$ is not singular and has at
      least $2$ non-light children.
  \end{itemize}
  
\end{definition}

(One can easily convince themselves that every vertex of a \ato{} falls in exactly one of those categories.)

\begin{definition}
\label{def:partition}
  Let $(\mathbf{T}, \mu)$ be a \ato{}. 
  A partition $\mathcal{P}$ of
  $T$ is an {\em $\epsilon$-partition} of $(\mathbf T,\mu)$ if
  \begin{itemize}
  \item each part is of one of the following  types (for some $v\in T$, see Fig.~\ref{fig:types}):
  \begin{enumerate}
  \item\label{typ1} $P=\{v\}$,
  \item\label{typ2} $P=\{v\} \cup \bigcup_{x \in F} T_x$ for some non-empty $F\subseteq F_v$,
  \item\label{typ3} $P=\bigcup_{x \in F} T_x$ for some $F
    \subseteq F_v$ with $|F|\geq 2$,
  \item\label{typ4} $P=T_v \setminus T_w$ for some $w\in T_v$ distinct from $v$ ($w$ is called the {\em cut vertex} of $P$, and the path from $v$ to the father of $w$ is called the {\em spine} of $P$),
  \end{enumerate}
where $v$ (which is easily checked to be the infimum of $P$) is called the {\em attachement vertex} of $P$ and is denoted by $A_{\mathcal P}(P)$; 
  \item each attachment vertex of a part of type~\ref{typ3} is also the attachment vertex of some part of type~\ref{typ1} or~\ref{typ2};
  \item every part which is not a singleton has $\mu$-measure at most $\epsilon$.
  \end{itemize}
\end{definition}

\begin{figure}[ht]
\begin{center}
\includegraphics[width=\textwidth]{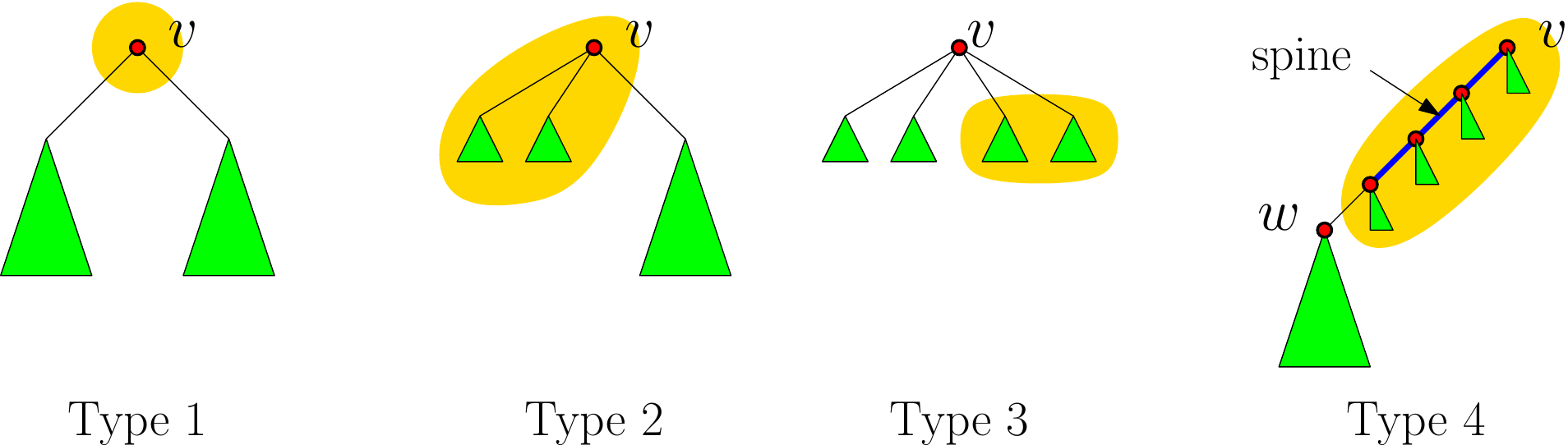}
\end{center}
\caption{Types of part in an $\epsilon$-partition}
\label{fig:types}
\end{figure}

An $\epsilon$-partition $\mathcal P$ of a {\ato} $(\mathbf{T},\mu)$ canonically defines a quotient rooted tree:

\begin{definition}
Let $\mathcal P$ be an $\epsilon$-partition of a {\ato} $(\mathbf{T},\mu)$.  The {\em quotient rooted tree} $\mathbf{T}/\mathcal{P}$ is the rooted tree with node set
$\mathcal P$, the root of which is the unique part of $\mathcal P$ that contains the minimum element of $\mathbf{T}$, where the father of
any non-root part $P$ is the part that contain the father of $A_{\mathcal P}(P)$ if $A_{\mathcal P}(P)\notin P$  (i.e. types 1,2, or 4) or $A_{\mathcal P}(P)$ itself if $A_{\mathcal P}(P)\in P$ (i.e. type 3). By abuse of notation, $\mathbf{T}/\mathcal{P}$ will also denote the \ato{} defined by the ancestor relation in the rooted tree $\mathbf{T}/\mathcal{P}$. 
\end{definition}

In several circumstances it will be handy to refer to an $\epsilon$-partition $\mathcal P$ of a {\ato} $(\mathbf{T},\mu)$ directly by means of the partition map $f:T\rightarrow \mathcal{P}$ (where $\mathcal{P}$ is meant as the vertex set of $\mathbf{T}/\mathcal{P}$).
Note that this mapping is a {\em weak homomorphism} in the sense  that for every $x,y\in T$ it holds
$f(x\wedge y)=f(x)\wedge f(y)$ except (maybe) in the case where
$f(x)=f(y)$.
The definition of  $\epsilon$-partitions can easily be transposed to provide a characterization of those weak  homomorphisms $f:\mathbf{T}\rightarrow\widetilde{\mathbf{T}}$ that define an $\epsilon$-partition.
Such mappings are called {\em $\epsilon$-partition functions}.

We now prove that every {\ato} has a small $\epsilon$-partition.

\begin{lemma}\label{lemma:epspartition}
Let $(\mathbf{T}, \mu)$ be a  \ato{} and $\epsilon > 0$. Then there exists an $\epsilon$-partition $\mathcal{P}_\epsilon(\mathbf{T}, \mu)$ of $(\mathbf{T}, \mu)$ with at most $4/\epsilon$ elements.
\end{lemma}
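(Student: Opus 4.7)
My plan is to build $\mathcal{P}_\epsilon(\mathbf{T},\mu)$ around the ``heavy'' vertices of $\mathbf T$---those $v$ with $\mu(T_v)\geq\epsilon$---and then bound the number of parts by a measure argument combined with a standard tree identity.

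First I would consider the set $H=\{v\in T:\mu(T_v)\geq\epsilon\}$. Since $u\leq v$ implies $T_u\supseteq T_v$, the set $H$ is closed under taking ancestors and hence forms a subtree of $\mathbf T$ (assuming $\mu(T)\geq\epsilon$, else the trivial one-part partition suffices). Every element of $H$ is exactly one of $\epsilon$-singular, $\epsilon$-chaining, or $\epsilon$-branching, and every \emph{leaf} of $H$ (a non-light vertex all of whose $\mathbf T$-children are light) is forced to be singular because then $T'_v=T_v$ has measure $\geq\epsilon$.

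Next I would bound the number of ``critical'' (singular or branching) vertices by $2/\epsilon$. The sets $T'_v$ for singular $v$ are pairwise disjoint: if a singular $v'$ is a proper descendant of a singular $v$, the unique child $u$ of $v$ with $v'\in T_u$ satisfies $\mu(T_u)\geq\mu(T_{v'})\geq\epsilon$, making $u$ non-light, so $T_{v'}\subseteq T_u$ is removed when forming $T'_v$. Each such $T'_v$ has measure $\geq\epsilon$, giving at most $1/\epsilon$ singular vertices. Viewing $H$ as a rooted tree (with children inherited from $\mathbf T$), the branching vertices are exactly the non-singular nodes of $H$ having $\geq 2$ children in $H$; by the tree identity saying the number of branching nodes is at most the number of leaves minus one, and the fact that leaves of $H$ are singular, there are at most $1/\epsilon-1$ branching vertices. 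Thus the number of critical vertices is at most $2/\epsilon-1$.

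I would then construct the partition in three passes. For each critical $v$ I install a ``root part'' $R_v$ anchored at $v$: either the singleton $\{v\}$ (type~1), or a type-2 part $\{v\}\cup\bigcup_{x\in F}T_x$ for some $F\subseteq F_v$ of light children. The remaining light subtrees of $v$ are packed greedily into type-3 parts, each of measure $\leq\epsilon$ and containing $\geq 2$ subtrees. Each maximal chain of $\epsilon$-chaining vertices lying strictly between two critical vertices (or from the root down to the topmost critical vertex) is partitioned greedily into type-4 spine parts $T_a\setminus T_b$ of measure $\leq\epsilon$. I would arrange the three packings so that every non-singleton part has measure $>\epsilon/2$: any final undersized bin from the greedy light-subtree packing at $v$ is absorbed into $R_v$ (turning type~1 into type~2), and a similar absorption handles an undersized terminal spine part by extending the preceding type-4 part along the chain. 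Counting then gives at most $2/\epsilon-1$ singletons and, since the non-singleton parts have total measure at most $1$ and each has measure $>\epsilon/2$, at most $2/\epsilon$ non-singleton parts, for an overall bound of $4/\epsilon$. The main obstacle I anticipate is simultaneously honouring the type constraints---particularly $|F|\geq 2$ for type-3 parts and the requirement that each type-3 attachment vertex also head a type-1 or type-2 part---while executing the leftover-merging step at the interface between a critical vertex's pile of light subtrees and the spine descending from it, since a naive merge of a spine tail with a critical singleton need not produce one of the four allowed shapes.
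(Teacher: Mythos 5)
Your proof follows the same route as the paper's: bound the number of $\epsilon$-singular vertices by $1/\epsilon$ via disjointness of the sets $T'_v$, bound the $\epsilon$-branching vertices by a leaf count in the heavy subtree, and then greedily pack the light subtrees and the chaining spines into type-3 and type-4 parts of measure at most $\epsilon$. Your use of the tree identity (branching $\leq$ leaves $-1$) is a hair tighter than the paper's ``branching $\leq$ singular,'' but that is cosmetic.

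The one place where you deviate is the absorption step, which aims at guaranteeing that every non-singleton part has measure $>\epsilon/2$ so that you can close with a single measure count. As you observe in your final paragraph, that step does not always go through: merging an undersized spine tail $T_a\setminus T_b$ with the critical vertex $v$ at its base produces $\{v\}\cup(T_a\setminus T_b)$, which is not of any of the four allowed shapes, and there may be no preceding type-4 part to extend. The paper does not attempt such merges. It instead treats each singular vertex's light children and each chaining chain as a separate packing job, bounds each by roughly $2p/\epsilon$ groups (where $p$ is the measure being packed), and sums; the potentially one undersized group per packing job is simply tolerated, not eliminated. So the cleanest fix for your write-up is to drop the absorption and fall back to exactly that coarser count. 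Note also that a type-2 root part at a branching vertex need not have measure $>\epsilon/2$, so even if the absorption succeeded, your final ``each non-singleton part has measure $>\epsilon/2$'' would not cover those parts; the paper counts them separately as ``at most one per branching vertex,'' which you should do as well.
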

\begin{proof}
For sake of clarity, we construct the desired $\epsilon$-partition in two steps. First, let $\mathcal{P}$ be the partition of $T$ obtained in the following manner:
\begin{itemize}
\item for every $\epsilon$-singular vertex $v$, keep $\{v\}$ in its own part, and, for an arbitrary order on the $\epsilon$-light children of $u$, group the $\{T_u ; \text{ $u$ $\epsilon$-light child of $v$}\}$ greedily such that the $\mu$-measure of each part is maximum while remaining less than $\epsilon$.
\item for every $\epsilon$-branching or $\epsilon$-chaining vertex $v$, group $v$ with $\{T_u ; \text{ $u$ $\epsilon$-light child of $v$}\}$,

\end{itemize}
It is easily seen that $\mathcal{P}$ is indeed an $\epsilon$-partition of $(\mathbf{T}, \mu)$ (with only parts of type $1,2,3$). However, the number of parts is not bounded by $O(1/\epsilon)$.

Now, as per the definition, an $\epsilon$-chaining vertex has at most one $\epsilon$-chaining child, and the previous construction never groups the two together. Hence, we can consider chains of parts rooted at $\epsilon$-chaining vertices, the parts of which we merge greedily (starting from the closest to the root, and going from parent to child) so that each part has maximum $\mu$-measure while remaining less than $\epsilon$, thus yielding a partition $\mathcal{P}_\epsilon(\mathbf{T}, \mu)$, in which every part is either:
\begin{enumerate}
\item an $\epsilon$-singular vertex alone, 
\item a subtree of $T$ rooted at an $\epsilon$-branching vertex,
\item or $\bigcup_{x \in F} T_x$ for $F$ a set of children of some $\epsilon$-singular vertex.
\item $T_u \setminus T_v$ for an $\epsilon$-chaining vertex $u$ and a descendent $v$ of $u$,
\end{enumerate}
Note that these four categories correspond exactly to the $4$ types described in Definition \ref{def:partition}, and since by definition, all parts are indeed of $\mu$-measure at most $\epsilon$, we get an $\epsilon$-partition of $(\mathbf{T}, \mu)$. Now we need to prove that the number of parts is bounded.

First, note that there are at most $1/\epsilon$ sets of the first kind. Indeed, to each singular vertex $v$ correspond a subtree ($T_v$ minus the sets $T_u$ for non-$\epsilon$-light children $u$ of $v$) of measure at least $\epsilon$, and each of these subtrees are disjoint.

Sets of type $2$ correspond to $\epsilon$-branching vertices. Consider the tree obtained by deleting $\epsilon$-light vertices. We obtain a rooted tree, in which every chaining vertex has exactly one child, branching vertices have at least two children, and leaves are necessarily singular vertices. Therefore the number of branching vertices is at most the number of singular vertices, hence there are at most $1/\epsilon$ sets of type $2$.

Finally, note that in the greedy construction of sets of both types 3 or 4, we apply a similar principle : we have a collection of disjoint sets, each of measure at most $\epsilon$, and of total measure, say $p$, and we partition this collection by forming groups of total measure at most $\epsilon$. It is an easy observation that this can be done using at most $2p/\epsilon$ groups : one can sets greedily by decreasing order of measure -- this insures that all groups but the last have weight at least $\epsilon/2$. Moreover these groups are overall all disjoint (a non-light vertex cannot be the descendent of a light one), so the total number of parts of type $3$ and $4$ is at most $2/\epsilon$. This concludes our proof.

\end{proof}

A partition $\mathcal{P}$ is said to be a {\em refinement} of another one $\mathcal{P'}$ if each element of $\mathcal{P}$ is a a subset of an element of $\mathcal{P'}$. 

\begin{lemma}
\label{lemma:epspartition2}
Let $\epsilon' < \epsilon$, $\mathbf{T}$ be a (finite) \ato{}, and $\mathcal{P}$ be an $\epsilon$-partition of $\mathbf{T}$. Then there exists an $\epsilon'$-partition $\mathcal{P}'$ of $\mathbf{T}$ with at most $81/\epsilon'$ elements, that is a refinement of $\mathcal{P}$. Moreover, $\mathcal{P}$ induces an $\epsilon$-partition of the \ato{} $T/{\mathcal{P'}}$.
\end{lemma}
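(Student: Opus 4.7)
The plan is to construct $\mathcal{P}'$ one part of $\mathcal{P}$ at a time: for each $P \in \mathcal{P}$ I will produce an $\epsilon'$-partition $\mathcal{P}'_P$ of $P$ (viewed as a subforest, or spine-plus-subforests, of $\mathbf{T}$) and then set $\mathcal{P}' = \bigcup_{P \in \mathcal{P}} \mathcal{P}'_P$. Refining inside each $P$ automatically guarantees that $\mathcal{P}'$ refines $\mathcal{P}$, and it also makes the ``moreover'' clause easy: after quotienting by $\mathcal{P}'$, the parts of $\mathcal{P}$ become well-defined subsets of vertices of $\mathbf{T}/\mathcal{P}'$.

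For the local refinement I will treat the four types of $P$ (see Definition~\ref{def:partition}) separately. A singleton (type~1) stays as is. For types~2 and~3, where $P$ is either rooted at its attachment vertex $v = A_{\mathcal{P}}(P)$ or a union of subtrees $T_x$ with $x \in F \subseteq F_v$, I will rerun the two-step construction of Lemma~\ref{lemma:epspartition} inside $P$ with threshold $\epsilon'$, using the notions of $\epsilon'$-singular, $\epsilon'$-branching and $\epsilon'$-chaining vertex computed with respect to $\mu$ restricted to $P$. For a type~4 part $P = T_v \setminus T_w$, I will additionally cut the spine from $v$ to the father of $w$ greedily into contiguous intervals of measure at most $\epsilon'$, and then apply the same greedy grouping to the light side-subtrees branching off the spine. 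In each case, $\mathcal{P}'_P$ is an $\epsilon'$-partition of $P$ whose parts have one of the four shapes allowed inside $\mathbf{T}$.

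For the count, the proof of Lemma~\ref{lemma:epspartition} actually bounds the number of parts obtained inside a rooted subforest of $\mu$-measure $M$, with threshold $\epsilon'$, by $4M/\epsilon'$, plus a bounded constant of extra parts coming from attachment-vertex boundary effects, from the spine endpoints in type~4, and from the requirement that every type~3 attachment vertex be witnessed by a type~1 or type~2 part. Summing,
\begin{equation*}
|\mathcal{P}'| \le \sum_{P \in \mathcal{P}} \Bigl(\tfrac{4\,\mu(P)}{\epsilon'} + c\Bigr) \le \frac{4}{\epsilon'} + c\,|\mathcal{P}| \le \frac{4}{\epsilon'} + \frac{4c}{\epsilon},
\end{equation*}
and since $\epsilon' < \epsilon$ a careful tracking of constants yields the claimed bound $81/\epsilon'$. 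For the ``moreover'' clause, because $\mathcal{P}'$ refines $\mathcal{P}$, each $P \in \mathcal{P}$ is a union of nodes of $\mathbf{T}/\mathcal{P}'$; the measures $\mu(P) \le \epsilon$ are preserved by the pushforward, and the local construction was chosen so that attachment vertices, cut vertices and spines are preserved under the quotient, so that the type of $P$ as a part of $\mathbf{T}/\mathcal{P}'$ coincides with its type in $\mathcal{P}$.

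The hard part of this plan will be keeping the type structure stable under the local refinement. Concretely, inside a type~4 part I must cut the spine without letting any $\mathcal{P}'$-subpart straddle the cut vertex $w$, so that in the quotient $\mathbf{T}/\mathcal{P}'$ the part $P$ is still a spine-minus-subtree; and inside type~2 or~3 parts I must make sure the attachment vertex $v$ ends up inside a $\mathcal{P}'$-subpart of type~1 or~2 that witnesses every type~3 subpart having attachment $v$. With these invariants maintained --- and the remaining accounting being routine greedy bookkeeping as in Lemma~\ref{lemma:epspartition} --- both the refinement bound and the induced-partition statement drop out.
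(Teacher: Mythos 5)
Your proposal is correct and takes essentially the same route as the paper: refine each part $P\in\mathcal{P}$ independently by applying the construction of Lemma~\ref{lemma:epspartition} locally (with the attachment vertex temporarily reattached for type~3 parts), then sum the counts and observe that each part of $\mathcal{P}$ becomes a union of nodes in $\mathbf{T}/\mathcal{P}'$. If anything you are more explicit than the paper's proof, which simply applies Lemma~\ref{lemma:epspartition} part-by-part, does not separate out the type~4 case, and does not address the ``moreover'' clause at all (and even states a bound of $16/\epsilon'$ in the body, not matching the $81/\epsilon'$ in the lemma statement); your identification of the type-stability and attachment-witness invariants, and the constant-absorption step $4/\epsilon'+c\,|\mathcal{P}|\le 4/\epsilon'+4c/\epsilon\le(4+4c)/\epsilon'$, fills in exactly what the paper leaves implicit.
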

\begin{proof}
Each part not of type $3$ is a tree with total measure at most $\epsilon$, so we can apply Lemma \ref{lemma:epspartition} independetly on each of these trees. For parts of type $3$, we start by putting the attachment vertex back in the part, and then again apply Lemma \ref{lemma:epspartition}, before removing it. Thus, we obtain an $\epsilon'$-partition of $\mathbf{T}$ with at most $16/\epsilon'$ elements.
\end{proof}

\subsection{Approximations}

In the previous subsection, we defined $\epsilon$-partitions and described the quotient map and quotient tree associated to an $\epsilon$-partition. These are convenient objects to represent the partition (in particular with respects to successive refinement) but they miss some information about the measure and the colors. This is why we introduce her the concept of $f$-reduction. We first give the definition and give two easy but essential lemmas before showing how to construct the particular ''small'' reductions that will be of use to construct appoximations in the proof of our main Theorem.

\begin{definition}
Let $(\mathbf{T},\mu)$ be a {\ato} and let $f:\mathbf{T}\rightarrow\widetilde{\mathbf{T}}$ be an $\epsilon$-partition function of $(\mathbf{T},\mu)$. 

An {\em $f$-reduction} of $\mathbf{T}$ is 
a color-preserving mapping $\pi:\mathbf{T}\rightarrow\hat{\mathbf{T}}$, 
where $\hat{\mathbf{T}}$ is a {\ato}, such that
$f$ factorizes as $\hat{f}\circ\pi$, where $\hat{f}$ is an $\epsilon$-partition function of $(\hat{\mathbf{T}},\mu\circ\pi^{-1})$, and $\pi$ satisfies the property that for every $x,y\in T$ with $f(x)\neq f(y)$ it holds
\begin{align*}
\pi(x\wedge y)&\ =\ \pi(x)\wedge\pi(y)\\
x\leq y&\iff \pi(x)\leq\pi(y)
\end{align*}
Such a situation we depict by the following commutative diagram:
$$
\xy\xymatrix{
\mathbf{T}\ar[d]_f\ar[r]^{\pi}&\hat{\mathbf T}\ar[dl]^{\hat{f}}\\
\widetilde{\mathbf{T}}
}\endxy
$$
\end{definition}

\begin{lemma}
\label{lem:pi_isom}
  Let $(\mathbf{T}, \mu)$ be a
  \ato{}, let $\epsilon > 0$, let 
$f: \mathbf{T}\rightarrow\widetilde{\mathbf{T}}$ be an
  $\epsilon$-partition function of $(\mathbf{T}, \mu)$,  
  let $\pi:\mathbf{T}\rightarrow\hat{\mathbf{T}}$ be an $f$-reduction of
  $\mathbf{T}$, and let $v_1,\dots,v_p\in T$ be such that $f(v_i)\neq f(v_j)$ whenever $v_i\neq v_j$.

Then $\pi$ induces 
 an isomorphism of $\mathbf{T}\langle v_1,\dots,v_p\rangle$ and
 $\hat{\mathbf{T}}\langle \pi(v_1),\dots,\pi(v_p)\rangle$
\end{lemma}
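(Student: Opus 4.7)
The plan is to show that the restriction of $\pi$ to $\mathbf{T}\langle v_1,\dots,v_p\rangle$ is a meet-preserving bijection onto $\hat{\mathbf{T}}\langle \pi(v_1),\dots,\pi(v_p)\rangle$. By Remark~\ref{rem:gato} the underlying set of such a sub-{\ato} is its generators together with the pairwise meets, so it suffices to control elements of the form $v_k$ and $v_a\wedge v_b$.

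First I would dispose of the easy half. Since $v_i\neq v_j$ forces $f(v_i)\neq f(v_j)$ by hypothesis, the defining property of the $f$-reduction immediately gives $\pi(v_i\wedge v_j)=\pi(v_i)\wedge\pi(v_j)$, and the order-equivalence clause gives $\pi(v_i)\neq\pi(v_j)$. This makes $\pi$ a well-defined surjection onto $\hat{\mathbf{T}}\langle\pi(v_1),\dots,\pi(v_p)\rangle$ and settles meet-preservation for generator pairs.

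The main work is injectivity. Suppose $\pi(x)=\pi(y)$ with $x\neq y$ in the sub-{\ato}. Factoring $f=\hat{f}\circ\pi$ forces $f(x)=f(y)$, so $x$ and $y$ share a part $P$ of the $\epsilon$-partition defined by $f$, and I would case-split on the type of $P$ from Definition~\ref{def:partition}. The driving observation is that locating an internal meet $v_a\wedge v_b$ inside $P$ tends to pull both $v_a$ and $v_b$ into $P$ as well, which would violate $f$-distinctness of the generators. Concretely, in parts of types 1, 2, 3 an internal meet lying in a child-subtree $T_z\subseteq P$ does exactly this, and the only residual configuration is a type-2 part with one of $x,y$ equal to the attachment vertex $v$ and the other a generator $v_k\in T_z$ for some $z\in F$. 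This sub-case is ruled out by picking a generator $v_i$ in a sibling subtree outside $P$, using $\pi(v_i\wedge v_k)=\pi(v_i)\wedge\pi(v_k)=\pi(x)$ to derive $\pi(v_k)\leq\pi(v_i)$, and then reflecting via the order clause to the forbidden relation $v_k\leq v_i$. For a type-4 part $P=T_v\setminus T_w$, an internal meet $v_a\wedge v_b\in P$ must have exactly one of $v_a,v_b$ in $T_w$ and the other in $P$ (the remaining combinations either put the meet inside $T_w$ or clash two generators in $P$), and iterating this for the generators underlying $x$ and $y$ again forces two distinct generators into $P$ or creates an order violation involving the spine.

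Once injectivity is secured, meet-preservation on the whole sub-{\ato} follows from Remark~\ref{rem:gato}: every meet of sub-{\ato} elements reduces to a pairwise generator meet, which $\pi$ already handles correctly, and injectivity guarantees that equalities among such meets in $\mathbf{T}$ are mirrored in $\hat{\mathbf{T}}$. Combining the bijection with the meet identities gives the desired {\ato} isomorphism. I expect the hardest point to be the type-4 sub-case of the injectivity argument, since the spine of a type-4 part admits several configurations depending on whether $x$ and $y$ lie on the spine or branch off it and on how the four generators defining $x$ and $y$ split between $T_w$ and $T_v\setminus T_w$; uniformly the trick is that $f$-distinctness of the generators forbids more than one from being ``hidden'' in any single region such as $T_w$.
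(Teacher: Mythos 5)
Your broad strategy --- preserve pairwise generator meets using the $f$-reduction identities, prove injectivity by locating the two conflicting elements in a common part $P$ and case-splitting on its type, and then upgrade to an isomorphism --- matches the paper's, but there is a genuine gap in the final upgrade and the injectivity cases are underspecified in a way that matters.

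The real problem is the last step. You write that meet-preservation on all of $\mathbf{T}\langle v_1,\dots,v_p\rangle$ ``follows from Remark~\ref{rem:gato}: every meet of sub-{\ato} elements reduces to a pairwise generator meet, which $\pi$ already handles correctly, and injectivity guarantees that equalities among such meets in $\mathbf{T}$ are mirrored in $\hat{\mathbf{T}}$.'' This inference does not work. Knowing $x\wedge y = v_a\wedge v_b$ in $\mathbf T$ together with $\pi(v_a\wedge v_b)=\pi(v_a)\wedge\pi(v_b)$ does not yield $\pi(x\wedge y)=\pi(x)\wedge\pi(y)$: the right-hand side $\pi(x)\wedge\pi(y)=\pi(v_i)\wedge\pi(v_j)\wedge\pi(v_k)\wedge\pi(v_\ell)$ reduces inside $\hat{\mathbf T}$, by Remark~\ref{rem:gato}, to $\pi(v_c)\wedge\pi(v_d)$ for some pair $(c,d)$ that may a priori differ from $(a,b)$, and injectivity gives you nothing about why the two Remark-\ref{rem:gato} reductions should agree; injectivity says $\pi$ separates distinct points, not that the combinatorics of triple meets is the same on both sides. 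This is exactly the content the paper establishes by a separate argument: reduce (using associativity) to proving $\pi(x\wedge v_k)=\pi(x)\wedge\pi(v_k)$ for $x=v_i\wedge v_j$, choose via Remark~\ref{rem:gato} a permutation $i',j',k'$ of $i,j,k$ with $v_i\wedge v_j\wedge v_k = v_{i'}\wedge v_{k'} = v_{j'}\wedge v_{k'}$, push both equalities through $\pi$ using the generator-pair rule, and deduce $\pi(v_i\wedge v_j\wedge v_k)=\pi(v_i)\wedge\pi(v_j)\wedge\pi(v_k)$. Some version of this is indispensable; a bijection preserving only generator-pair meets is not automatically a semilattice homomorphism.

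On injectivity, your organization by type of $P$ is a legitimate alternative to the paper's organization by which of $i,j,k,\ell$ coincide, but two places need tightening. In the type-2 residual case you invoke ``a generator $v_i$ in a sibling subtree outside $P$'' as though one were available at will; it is, but only because the attachment vertex $v$, being $v_a\wedge v_b$ with $f(v_a)\neq f(v_b)$, necessarily has at least one of $v_a, v_b$ outside $P$ (two inside would share the part), and that factor is the sibling you must use. If instead $v$ equals a generator, the contradiction is even more direct: two distinct generators would lie in $P$. In type 4 you stop at ``iterating this ... forces two distinct generators into $P$ or creates an order violation,'' which is a sketch of a sketch; the needed observation is that the $P$-side factor of any meet lying in $P=T_v\setminus T_w$ is unique up to equality (a second distinct one would share $P$), that any such meet therefore equals $v_a\wedge w$ for the unique $P$-side generator $v_a$ (so two such meets coincide), and that a generator-vs.-meet coincidence reduces, via $\pi(v_a\wedge v_b)=\pi(v_a)\wedge\pi(v_b)$ and the order clause of the $f$-reduction, to the impossible relation $v_a\leq v_b$.
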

\begin{proof}
 By assumption, for every distinct $v_i,v_j$ it holds $f(v_i)\neq f(v_j)$ thus
 $\pi(v_i\wedge v_j)=\pi(v_i)\wedge\pi(v_j)$. As $\pi$ is color preserving,
in order to prove that $\pi$ is an homorphism, we only have to check (as $\wedge$ is associative) that if $x\in \mathbf{T}\langle v_1,\dots,v_p\rangle$ and $1\leq k\leq p$ then $\pi(x\wedge v_k)=\pi(x)\wedge\pi(v_k)$.
According to Remark~\ref{rem:gato}, there exist $i,j$ such that $x=v_i\wedge v_j$. Moreover, there exists a permutation $i',j',k'$ of $i,j,k$ such that 
$v_i\wedge v_j\wedge v_k=v_{i'}\wedge v_{k'}=v_{j'}\wedge v_{k'}$.
 Hence
$\pi(v_i\wedge v_j\wedge v_k)=\pi(v_{i'}\wedge v_{k'})=\pi(v_{i'})\wedge \pi(v_{k'})$ and similarly $\pi(v_i\wedge v_j\wedge v_k)=\pi(v_{j'})\wedge \pi(v_{k'})$. Thus $\pi(x\wedge v_k)=\pi(v_i\wedge v_j\wedge v_k)=\pi(v_{i'})\wedge \pi(v_{j'})\wedge \pi(v_{k'})=\pi(v_i\wedge\pi(v_j)\wedge\pi(v_k)=\pi(x)\wedge \pi(v_k)$.

In order to prove that $\pi$ is an isomorphism, we have to check that $\pi$ is injective, that is, according to Remark~\ref{rem:gato}, that $\pi(v_i\wedge v_j)=\pi(v_k\wedge v_\ell)$ not only implies $f(v_i\wedge v_j)=f(v_k\wedge v_\ell)$, but also
implies $v_i\wedge v_j=v_k\wedge v_\ell$. 
Let $v_i,v_j,v_k,v_\ell$ be all distinct.
Then $f(v_i\wedge v_j)$ is of type $1$ or $2$, thus $v_i\wedge v_j$ and $v_k\wedge v_\ell$ coincides with  the attachment vertex of $f(v_i\wedge v_j)$ thus $v_i\wedge v_j=v_k\wedge v_\ell$.
Also, if $\pi(v_i\wedge v_j)=\pi(v_k)$ then $f(v_k)$ has type $1$ or $2$. As $\pi(v_k)\leq \pi(v_i)$ it holds $v_k\leq v_i$ hence $v_k$ is the attachment vertex of $f(v_k)$ thus $v_k=v_i\wedge v_j$.
If $\pi(v_i\wedge v_k)=\pi(v_j\wedge v_k)$ then either 
$f(v_i\wedge v_k)$ has type $1$ or $2$, in which case $v_i\wedge v_k=v_j\wedge v_k$, or $f(v_i\wedge v_k)$ has type $4$ and again $v_i\wedge v_k=v_j\wedge v_k$.
Last, if $\pi(v_i\wedge v_j)=\pi(v_j)$ then $\pi(v_j)\leq \pi(v_i)$ thus
$v_j\leq v_i$, that is $v_i\wedge v_j=v_j$.
\end{proof}

\begin{lemma}
\label{lem:redsim}
  Let $(\mathbf{T}, \mu)$ be a
  \ato{}, let $\epsilon > 0$, let 
$f: \mathbf{T}\rightarrow\widetilde{\mathbf{T}}$ be an
  $\epsilon$-partition function of $(\mathbf{T}, \mu)$,  
  let $\pi:\mathbf{T}\rightarrow\hat{\mathbf{T}}$ be an $f$-reduction of
  $\mathbf{T}$, and let $\hat{\mu}=\mu\circ\pi^{-1}$.
  
  Then for every quantifier free formula $\phi\in{\rm QF}_p$ it holds
  $$| \langle \phi, (\mathbf{T}, \mu) \rangle - \langle \phi,
  (\hat{\mathbf{T}}, \hat{\mu}) \rangle | <
  p^2\epsilon.$$
  Thus, 
  $${\rm dist}((\mathbf{T}, \mu), (\hat{\mathbf{T}}, \hat{\mu}))
  <6 \epsilon.$$
\end{lemma}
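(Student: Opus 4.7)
The plan is to interpret both Stone pairings as sampling probabilities: if $X_1,\dots,X_p$ are i.i.d.\ samples of $(\mathbf{T},\mu)$, then $\pi(X_1),\dots,\pi(X_p)$ are i.i.d.\ samples of $(\hat{\mathbf{T}},\hat{\mu})$ since $\hat{\mu}=\mu\circ\pi^{-1}$. Hence the difference $|\langle\phi,(\mathbf{T},\mu)\rangle-\langle\phi,(\hat{\mathbf{T}},\hat{\mu})\rangle|$ is bounded by the probability that $\phi$ takes different truth values on $(X_1,\dots,X_p)$ and $(\pi(X_1),\dots,\pi(X_p))$. One then hopes that this ``bad event'' is controlled by the event that two samples fall in the same non-singleton part of $\widetilde{\mathbf{T}}$, whose $\mu$-measure is at most $\epsilon$ by Definition~\ref{def:partition}.

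Concretely, define $B$ to be the event ``there exist $i\neq j$ with $X_i\neq X_j$ and $f(X_i)=f(X_j)$''. I would first argue that on $B^c$ the formula $\phi$ has the same truth value on the two tuples. The key ingredient is Lemma~\ref{lem:pi_isom}: applied to the \emph{distinct} elements of $\{X_1,\dots,X_p\}$, it yields that $\pi$ induces a color-preserving isomorphism between $\mathbf{T}\langle X_1,\dots,X_p\rangle$ and $\hat{\mathbf{T}}\langle\pi(X_1),\dots,\pi(X_p)\rangle$. One also needs the observation that on $B^c$ equalities are preserved, i.e.\ $X_i=X_j\iff\pi(X_i)=\pi(X_j)$ (the $\Leftarrow$ direction uses $f=\hat{f}\circ\pi$), which allows the quantifier-free formula $\phi$ (which may identify free variables) to be evaluated identically on both sides.

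Bounding $\Pr(B)$ is then routine: by the union bound and decomposition over parts, $\Pr(B)\le\binom{p}{2}\sum_{P\in\widetilde{T}}\Pr(X_1,X_2\in f^{-1}(P),\,X_1\neq X_2)$. Singletons (type~\ref{typ1}) contribute $0$ because $X_1\neq X_2$ is required; every other part satisfies $\mu(f^{-1}(P))\le\epsilon$ by Definition~\ref{def:partition}, so the sum is at most $\epsilon\sum_P\mu(f^{-1}(P))=\epsilon$, giving the first bound $\Pr(B)\le\binom{p}{2}\epsilon<p^2\epsilon$. The distance bound then follows from $\sum_{p\ge 1}2^{-p}p^2=6$ together with the truncation $\sup_\phi|\cdots|\le\min(1,p^2\epsilon)$: $\mathrm{dist}((\mathbf{T},\mu),(\hat{\mathbf{T}},\hat{\mu}))<6\epsilon$.

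The only real subtlety is the ``promotion'' step: Lemma~\ref{lem:pi_isom} is stated for tuples with pairwise distinct entries, whereas the samples $X_1,\dots,X_p$ may repeat (in particular when $\mu$ has atoms). The equivalence $X_i=X_j\iff\pi(X_i)=\pi(X_j)$ on $B^c$ is precisely what is needed to lift the isomorphism to tuples with repetitions, after which the quantifier-free formula $\phi$ is evaluated identically in $\mathbf{T}$ and $\hat{\mathbf{T}}$.
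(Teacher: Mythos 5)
Your argument is correct and is essentially the paper's proof recast probabilistically: where the paper defines the sets $A,\hat A$ of collision-free tuples, argues $\mu^{\otimes p}(A)=\hat\mu^{\otimes p}(\hat A)$ via Lemma~\ref{lem:pi_isom}, and adds two error terms of $p^2\epsilon/2$, you couple the samples through $\pi$ and get a single error term $\Pr(B)\le\binom{p}{2}\epsilon$, but the decomposition, the use of Lemma~\ref{lem:pi_isom}, and the bound $\mu(f^{-1}(P))\le\epsilon$ on non-singleton parts are the same. The remark about lifting the isomorphism to tuples with repetitions is already absorbed into Lemma~\ref{lem:pi_isom} (whose hypothesis only requires distinct $v_i$ to have distinct $f$-values, and whose conclusion is an injective map), so no separate argument is needed.
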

\begin{proof}
Let $\hat{f}:\hat{\mathbf{T}}\rightarrow\widetilde{\mathbf{T}}$ be the $\epsilon$-partition function associated to the $f$-reduction.
Define the sets
\begin{align*}
A&=\{(v_1,\dots,v_p)\in\phi(\mathbf{T}):\ \forall i\neq j\ (f(v_i)=f(v_j))\rightarrow (v_i=v_j)\}\\
\hat A&=\{(z_1,\dots,z_p)\in\phi(\hat{\mathbf{T}}):\ \forall i\neq j\ (\hat f(z_i)=\hat f(z_j))\rightarrow (z_i=z_j)\}
\end{align*}
According to Lemma~\ref{lem:pi_isom}, it holds $\mu^{\otimes p}(A)=\hat{\mu}^{\otimes p}(\hat A)$.
As 
\begin{align*}
&| \langle \phi, (\mathbf{T}, \mu) \rangle -\mu^{\otimes p}(A)|\\
&\qquad\leq
\mu^{\otimes p}\left(\left\lbrace (v_1,\dots,v_p)\in T:\ \exists i\neq j\ (f(v_i)=f(v_j))\wedge\neg (v_i=v_j)\right\rbrace\right)\\
&\qquad\leq \binom{p}{2}\max\{\mu(f^{-1}(p)):\ p\in\widetilde{\mathbf T}\text{ and }|f^{-1}(p)|>1\}\\
&\qquad <p^2\epsilon/2
\end{align*}
and as the same holds for $| \langle \phi, (\hat{\mathbf{T}}, \hat{\mu}) \rangle -\hat{\mu}^{\otimes p}(\hat A)|$
we deduce that $| \langle \phi, (\mathbf{T}, \mu) \rangle - \langle \phi,
  (\hat{\mathbf{T}},\hat{\mu}) \rangle | <
  p^2\epsilon$.  As $\sum_p p^2 2^{-p}=6$ the bound on the distance follows.
\end{proof}

We can now define the particular $f$-reductions, called {\em standard $f$-reductions} that we will consider to construct approximations. The fact that this construction yields indeed an $f$-reduction is proven it Lemma \ref{lemma:stdred}.

\begin{definition}
  Let $(\mathbf{T}, \mu)$ be a
  \ato{}, let $\epsilon > 0$, and let 
$f: \mathbf{T}\rightarrow\widetilde{\mathbf{T}}$ be an
  $\epsilon$-partition function of $(\mathbf{T}, \mu)$.
  
  The {\em standard $f$-reduction} $\pi:\mathbf{T}\rightarrow\hat{\mathbf{T}}$ of $(\mathbf{T}, \mu)$ is defined as follows. For each part $P$ of $\mathcal P$ we associate a rooted tree (or a rooted forest) $\mathbf Y_P$ and we define the projection $\pi$ from $T$ to the
 domain $\hat{T}=\bigcup Y_P$ of $\hat{\mathbf T}$ as follows:
\begin{itemize}
\item If $P$ is of type $1$ ($P = \{v\}$):

Then  $\mathbf Y_P$ is a single-node rooted tree,  $\pi$ maps $v$ to the only vertex in  $\mathbf Y_P$, which is assigned the same color and  weight as $v$.
 \item If $P$ is of type $2$ ($P=\{v\} \cup \bigcup_{x \in F} T_x$):
 
Then $\mathbf Y_P$ is rooted at a vertex  $a_P$ (having same color and weight as $v$) that has, for each color $\gamma$ present in $P\setminus\{v\}$, a child $b_{P,\gamma}$ of color $\gamma$ whose weight is the sum of the weights of $\gamma$-colored vertices of $P\setminus\{v\}$; the projection $\pi$ maps $v$ to $a_P$ and $u\in P\setminus\{v\}$ to
$b_{P,c(u)}$.
  \item If $P$ is of type $3$ ($P=\bigcup_{x \in F} T_x$):
  
Then $\mathbf{Y}_P$ is a set of single node rooted trees, with roots $b_{P,\gamma_1},\dots,b_{P,\gamma_\ell}$, where  $\gamma_1, \dots, \gamma_\ell$ are the colors present in $P$. The projection $\pi$ maps each vertex $u\in P$ to $b_{P,c(u)}$, and the weight of $b_{P,\gamma_i}$ is defined as the sum of the weights of the $u\in P$ with color $\gamma_i$.

  \item If $P$ is of type $4$  ($P=T_v \setminus T_w$):
  
Then   $\mathbf{Y}_P$ is a rooted caterpillar with spine $a_{P,\gamma_1},\dots,a_{P,\gamma_\ell}$ rooted at $a_{P,\gamma_1}$, where  $\gamma_1, \dots, \gamma_\ell$ are the colors present in the spine $S$ of $P$;
the projection $\pi$ maps each vertex $u\in S$ to $a_{P,c(u)}$, and the weight of $a_{P,\gamma_i}$ is defined as the sum of the weights of the $u\in S$ with color $\gamma_i$. Each vertex $a_{P,\gamma_i}$ has sons
$b_{P,\gamma_i,\rho_{i,j}}$ ($1\leq j\leq s_i\}$) where $\rho_{i,1},\dots,\rho_{i,s_i}$ are the (distinct) colors of the vertices $u\in P\setminus S$ such that
$u\wedge w$ has color $\gamma_i$. The projection $\pi$ maps each 
$u\in P\setminus S$ to $b_{P,c(u\wedge w),c(u)}$.
  \end{itemize}

 The colored {\ato} $\hat{\mathbf{T}}$ is defined by
the rooted tree $\mathbf{T}_\mathcal{P}$, which is constructed from the disjoint 
union $\bigcup_{P\in\mathcal P}\mathbf{Y}_P$ as follows:
For each non root part $P$ of $\mathcal{P}$ with father part $P'$,
 the node of $\mathbf{Y}_{P'}$ that will serve as the father of the root(s) of $\mathbf{Y}_P$ is defined as follows:
if $P'$ has type $1$ or $2$, then the father of the root of $\mathbf Y_P$ is the root of $\mathbf{Y}_{P'}$; if $P$ is of type $4$, then the father of the root of $\mathbf{Y}_{P}$ is  the maximum vertex of the spine of 
$\mathbf{Y}_{P'}$.
 \end{definition}

Before proving properties of standard $f$-reductions, let us illustrate our definitions by an example drawn on Figure \ref{fig:example1}. The tree $\mathbf{T}$ has $80$ vertices with uniform weight $1/80$ and suppose $\epsilon=1/5$.  
\begin{figure}[ht]
\begin{center}
\includegraphics[height=5.5cm]{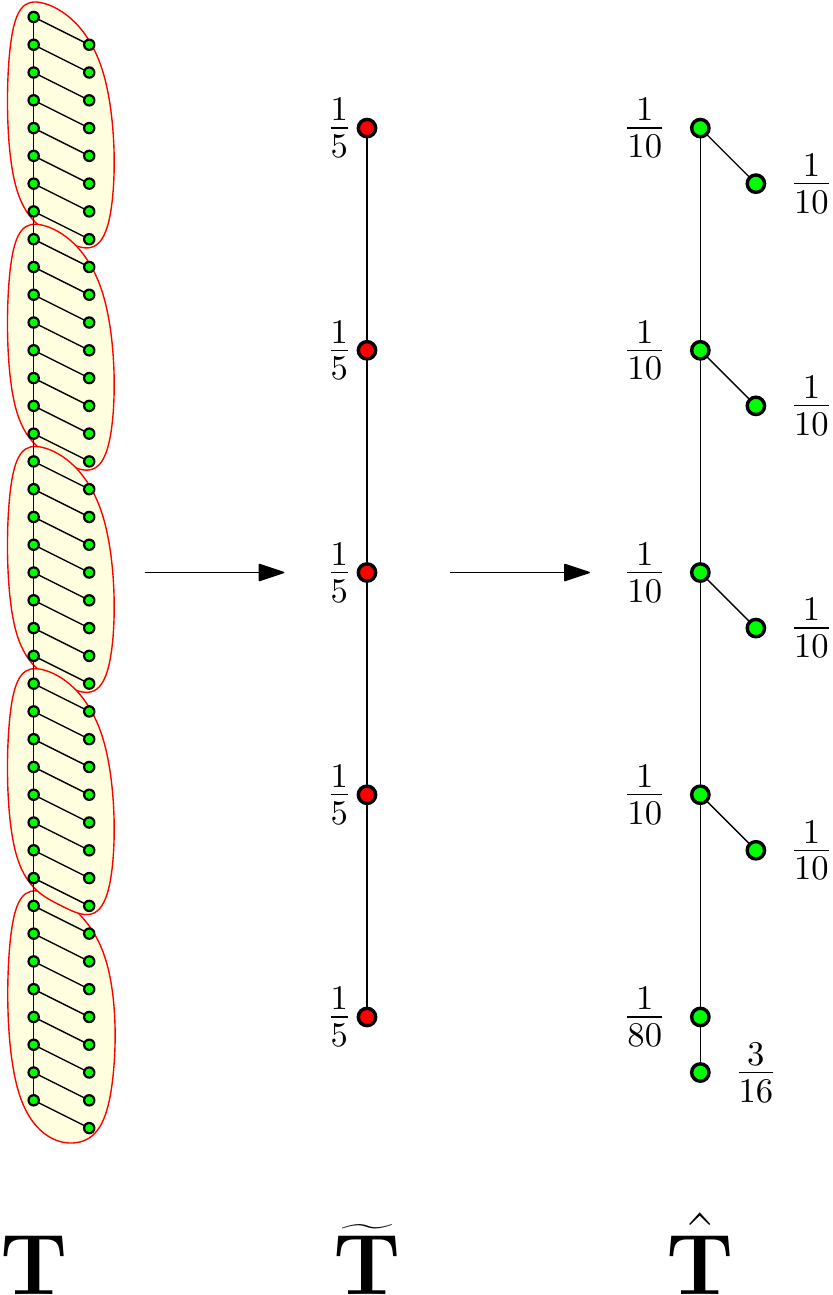}
\end{center}
\caption{Example of ${\mathbf{T}},\widetilde{\mathbf{T}},\hat{\mathbf{T}}$.}
\label{fig:example1}
\end{figure}
Consider for instance the formula 
$\phi: (x_1\wedge x_2\neq x_1) \Land (x_1\wedge x_2\neq x_2)$. Then
$\langle\phi,\mathbf{T}\rangle=0.4875$ and
$\langle\phi,\hat{\mathbf{T}}\rangle=0.4$ 
are close, but $\langle\phi,\widetilde{\mathbf{T}}\rangle=0$.

\begin{lemma}
\label{lemma:stdred}
  Let $(\mathbf{T}, \mu)$ be a
  \ato{}, let $\epsilon > 0$, and let 
$f: \mathbf{T}\rightarrow\widetilde{\mathbf{T}}$ be an
  $\epsilon$-partition function of $(\mathbf{T}, \mu)$.
  
  Then the standard $f$-reduction 
 $\pi:\mathbf{T}\rightarrow\hat{\mathbf{T}}$ of
  $\mathbf{T}$  is an $f$-reduction and
  $$|\hat{T}|\leq C(C+1) |\widetilde{T}|,$$
   where 
  $C$ denotes the number of colors for elements of $T$.
\end{lemma}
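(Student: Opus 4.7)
The plan is to check the three requirements of an $f$-reduction in turn, then count.

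First I would define $\hat{f}:\hat{\mathbf{T}}\to\widetilde{\mathbf{T}}$ by sending every vertex of $\mathbf{Y}_P$ to the part $P\in\widetilde{T}$. The factorization $f=\hat{f}\circ\pi$ is immediate from the definition of $\pi$: every $u\in P$ is sent by $\pi$ into $\mathbf{Y}_P$, which $\hat{f}$ sends back to $P=f(u)$. Color-preservation of $\pi$ also holds by construction, since for type~1 the unique image carries the color of $v$, for type~2 and 3 each $b_{P,\gamma}$ carries the color $\gamma$ of its preimages, and the same is true for the spine vertices $a_{P,\gamma_i}$ and the off-spine vertices $b_{P,c(u\wedge w),c(u)}$ of type~4.

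Next I would verify that $\hat{f}$ is itself an $\epsilon$-partition function. The parts of the partition induced by $\hat{f}$ are precisely the $\mathbf{Y}_P$. A case check on the four part types shows that $\mathbf{Y}_P$ has the same type in $\hat{\mathbf{T}}$ as $P$ does in $\mathbf{T}$: for type~2 the $b_{P,\gamma}$ are exactly the non-empty children subtrees attached to $a_P$; for type~3 the $b_{P,\gamma_i}$ form a set of roots of subtrees of the parent $\mathbf{Y}_{P'}$ whose attachment vertex lies in $\mathbf{Y}_{P'}$ (which is of type~1 or 2 by the original partition axiom); for type~4 the spine $a_{P,\gamma_1},\dots,a_{P,\gamma_\ell}$ with its off-spine children realizes a $T_v\setminus T_w$-shape. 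The mass of each part is preserved: $(\mu\circ\pi^{-1})(\mathbf{Y}_P)=\mu(P)$, hence singleton or $\leq\epsilon$ as required. The ``attachment compatibility'' condition in Definition~\ref{def:partition} follows from the description of how $\mathbf{Y}_P$ attaches to $\mathbf{Y}_{P'}$.

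The core technical point is the order-preservation and meet-preservation for $x,y$ with $f(x)\neq f(y)$. Here I would argue by following the tree path between $x$ and $y$ across the parts traversed. Write $P_x=f(x)$, $P_y=f(y)$. Since $P_x\neq P_y$, either one of $P_x,P_y$ is a strict ancestor of the other in $\mathbf{T}/\mathcal{P}$ or they share a strict common ancestor part; in every case the meet $x\wedge y$ lies in a part $Q$ that sits on the tree path from $P_x$ to $P_y$. Using the explicit description of $\mathbf{Y}_Q$ and the gluing rules between the $\mathbf{Y}_P$, a small case analysis on the type of $Q$ (and, when $Q\in\{P_x,P_y\}$, on the type of $Q$ relative to where $x$ or $y$ sits in $\mathbf{Y}_Q$) shows that $\pi(x\wedge y)=\pi(x)\wedge\pi(y)$, and that $x\leq y$ in $\mathbf{T}$ is equivalent to $\pi(x)\leq\pi(y)$ in $\hat{\mathbf{T}}$. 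The main obstacle is here: the type~4 case is the most delicate because one must distinguish whether the meet lies on the spine or is an off-spine descendant, and match each situation with the corresponding $a_{P,\gamma_i}$ or $b_{P,\gamma_i,\rho_{i,j}}$.

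Finally, the cardinality bound is routine. For each part~$P$, $|\mathbf{Y}_P|\leq 1$ in type~1, $\leq 1+C$ in type~2, $\leq C$ in type~3, and $\leq C+C^2=C(C+1)$ in type~4, where $C$ is the number of colors. As there are $|\widetilde{T}|$ parts, summing yields $|\hat{T}|\leq C(C+1)\,|\widetilde{T}|$.
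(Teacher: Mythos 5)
Your proposal follows the same architecture as the paper's proof: factor $f$ through $\pi$, check that $\hat f$ is an $\epsilon$-partition function, verify meet- and order-preservation by locating $x\wedge y$ in the part $f(x)\wedge f(y)$ and doing a case analysis on the type of that part (with type~4 indeed being the delicate case, since one must show the meet necessarily falls on the spine and matches the reduction via the cut vertex), and finally sum $|\mathbf{Y}_P|$ over parts. This matches the paper's argument; your proposal in fact spells out the cardinality bookkeeping and the color-preservation check that the paper leaves implicit, while leaving the type-by-type verification of $\pi(x\wedge y)=\pi(x)\wedge\pi(y)$ at the same level of terseness as the original.
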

\begin{proof}
Let $\hat{\mu}=\mu\circ\pi^{-1}$.
As $\pi(x)=\pi(y)$ imply $f(x)=f(y)$ there exists a mapping 
$\hat{f}:\hat{\mathbf{T}}\rightarrow\widetilde{\mathbf{T}}$ such that
$f=\hat{f}\circ \pi$. Moreover, it is easily checked that $\hat{f}$ is an 
$\epsilon$-partition function of $(\hat{\mathbf{T}},\hat{\mu})$.

For every $u,v$ in $T$ with $f(u)\neq f(v)$,
 the element
$\pi(u)\wedge\pi(v)$ belongs to the part
 $Y_{\pi(u\wedge v)}$, which has the same type as the part $P$
 of $u\wedge v$.
If $P$ has type $1$ or $2$ then $u\wedge v=A_{\mathcal{P}}(P)$ and $\pi(u)\wedge \pi(v)=A_{\hat{\mathcal{P}}}(Y_{\pi(u\wedge v)})=\pi(A_{\mathcal{P}}(P))=\pi(u\wedge v)$. If $P$ has type $3$ then
one of $u$ and $v$ has to belong to $P$ (as $P$ has only one child) and the other one has to belong to a part greater than $P$.
Without loss of generality assume $u>v$ and $v\in P$. Then $u\wedge v=w\wedge v$, where $w$ denotes the cut-vertex of $P$.
Similarly, we have $\pi(u)\wedge \pi(v)=\hat w\wedge \pi(v)$, where $\hat w$ denotes the cut-vertex of $Y_P$. By construction, it holds
$\pi(w\wedge v)=\hat w\wedge \pi(v)$ hence $\pi(u\wedge v)=\pi(u)\wedge\pi(v)$. It follows that $\pi$ is an $f$-reduction.
 \end{proof}



%

\begin{lemma}
\label{lem:compose}
  Let $(\mathbf{T}, \mu)$ be a
  \ato{}, let $\epsilon > 0$, let 
$f_1: \mathbf{T}\rightarrow\widetilde{\mathbf{T}}_1$
(resp. $f_2: \mathbf{T}\rightarrow\widetilde{\mathbf{T}}_2$)
 be  an $\epsilon_1$-partition function
 (resp. an $\epsilon_2$-partition function) of $(\mathbf{T}, \mu)$.
 
 Let $\pi_1:\mathbf{T}\rightarrow\hat{\mathbf{T}}_1$ 
(resp. $\pi_2  :\mathbf{T}\rightarrow\hat{\mathbf{T}}_2$)
  be the standard $f_1$-reduction (resp. the standard $f_2$-reduction) of
  $\mathbf{T}$ of $\mathbf{T}$.
  
 If there exists a mapping $g:\widetilde{\mathbf{T}}_2\rightarrow\widetilde{\mathbf{T}}_1$ (i.e. if partition $f_2$ refines partition $f_1$)
 then the standard $(g\circ\hat{f}_2)$-reduction $p$ of $\mathbf{T}$
 maps $\hat{\mathbf{T}}_2$ onto $\hat{\mathbf{T}}_1$
 in such a way that the following diagram commutes:
  
$$
\xy\xymatrix@C=50pt{
\mathbf{T}\ar[d]^(.4){f_2}\ar@/_3ex/[dd]_{f_1}
\ar[r]^(.6){\pi_2}\ar@/^3ex/[rr]^{\pi_1}&\hat{\mathbf T}_2\ar[dl]_{\hat{f}_2}\ar[r]^(.3){p}\ar[ddl]|(.47){g\circ\hat{f}_2}
&\hat{\mathbf T}_1\ar[ddll]^{\hat{f}_1}\\
\widetilde{\mathbf{T}}_2\ar[d]^(.4)g\\
\widetilde{\mathbf{T}}_1
}\endxy
$$
\end{lemma}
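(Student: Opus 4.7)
The plan is to unfold the definitions and show that applying $\pi_1$ directly produces the same target and assignment as first applying $\pi_2$ and then running the standard reduction on $\hat{\mathbf{T}}_2$ along $g \circ \hat{f}_2$.

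First I would verify that $g \circ \hat{f}_2 \colon \hat{\mathbf{T}}_2 \to \widetilde{\mathbf{T}}_1$ is indeed an $\epsilon_1$-partition function of $(\hat{\mathbf{T}}_2, \hat{\mu}_2)$, where $\hat{\mu}_2 = \mu \circ \pi_2^{-1}$. Its fibers are exactly the sets $\pi_2(P)$ for $P \in \mathcal{P}_1$, since $g \circ \hat{f}_2 \circ \pi_2 = g \circ f_2 = f_1$ and $\pi_2$ is surjective. Each $\pi_2(P)$ inherits the type of $P$: by Lemma~\ref{lemma:stdred}, $\pi_2$ preserves the meet operation whenever its arguments lie in distinct parts of $\mathcal{P}_2$, so the local tree structure inside $P$ is faithfully reflected in $\pi_2(P)$; and the measure bound $\hat{\mu}_2(\pi_2(P)) = \mu(P) \leq \epsilon_1$ is immediate. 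Hence by Lemma~\ref{lemma:stdred} the standard $(g \circ \hat{f}_2)$-reduction $p \colon \hat{\mathbf{T}}_2 \to \hat{\mathbf{T}}'$ exists and is an $f$-reduction.

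Next I would show that $\hat{\mathbf{T}}'$ coincides with $\hat{\mathbf{T}}_1$ under an identification carrying $p \circ \pi_2$ to $\pi_1$. The standard reduction builds, for each part $P \in \mathcal{P}_1$, a small rooted (caterpillar) tree $Y_P$ whose nodes are determined entirely by the type of $P$ together with the (color, weight)-data of its elements. The task is to exhibit, for each $P$, a color- and weight-preserving bijection $Y'_{\pi_2(P)} \to Y_P$ under which $p(\pi_2(v)) = \pi_1(v)$ for every $v \in P$. The father relations between the small trees in $\hat{\mathbf{T}}_1$ and in $\hat{\mathbf{T}}'$ are both governed by the father relations between parts of $\mathcal{P}_1$ (and their counterparts in $\hat{\mathbf{T}}_2$), and these match via $g$, so the local bijections will glue into the desired global isomorphism.

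The local comparison is a case analysis on the type of $P$. Types~1, 2, and 3 are essentially immediate, since the only data involved is the (color, weight) collection of elements of $P$, and the associativity of summation (first within each $\mathcal{P}_2$-sub-part $Q \subseteq P$, then across sub-parts) ensures that the two successive reductions agree with the single direct reduction. The delicate case, and the main obstacle, is type~4: here $Y_P$ is a colored caterpillar whose spine nodes are indexed by the colors of spine vertices of $P$ and whose off-spine leaves $b_{P,\gamma,\rho}$ are indexed by pairs $(\gamma,\rho) = (c(u \wedge w_P), c(u))$ with $w_P$ the cut vertex. One must verify that the cut vertex of the type-4 structure $\pi_2(P)$ is $\pi_2(w_P)$ and that $\pi_2(u) \wedge \pi_2(w_P) = \pi_2(u \wedge w_P)$ has the same color as $u \wedge w_P$; both facts follow from the color-preservation of $\pi_2$ and from its meet-preservation between distinct $\mathcal{P}_2$-parts, the edge case where $u$ and $w_P$ lie in the same $\mathcal{P}_2$-part being settled by inspecting the explicit caterpillar construction of $Y_Q$ applied inside that sub-part. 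Once these identifications are in place, the associative weight-summation argument used in the other types closes the proof.
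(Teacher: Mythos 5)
The paper does not actually provide a proof of this lemma: it says only ``The tedious but easy check is omitted here.'' So there is no argument in the paper to compare your proposal against, and your sketch supplies exactly the kind of detail the authors chose to elide. Your overall plan is the natural one and looks sound: first verify that $g\circ\hat{f}_2$ is an $\epsilon_1$-partition function of $(\hat{\mathbf{T}}_2,\hat\mu_2)$ whose parts are the images $\pi_2(P)$ for $P\in\mathcal{P}_1$ (using $g\circ\hat f_2\circ\pi_2=g\circ f_2=f_1$ and surjectivity of $\pi_2$), then compare $Y'_{\pi_2(P)}$ with $Y_P$ part by part via a color-and-weight-preserving bijection, gluing the local identifications along the parent relation induced by $g$. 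You correctly single out type $4$ as the delicate case: the issue is to see that $\pi_2(P)$ is again of type $4$ with cut vertex $\pi_2(w_P)$, and that the pair $(c(u\wedge w_P),c(u))$ indexing the off-spine leaf of $u$ is preserved. The key observation there, which you gesture at but could make more explicit, is that when $u$ and $w_P$ lie in the same $\mathcal{P}_2$-part $Q$ of type $4$, the cut vertices of $Q$ and of $P$ project $u$ to the same spine element, so $u\wedge w_Q=u\wedge w_P$ and the colors agree; this is what ensures the two-step reduction tags $u$ with the same pair as the one-step reduction.

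Two small points you should add to make the argument complete. First, when checking that $g\circ\hat f_2$ is an $\epsilon_1$-partition function, you verify the measure bound and the part types but not the remaining constraint in Definition~\ref{def:partition} (that every attachment vertex of a type-$3$ part is also the attachment vertex of a type-$1$ or type-$2$ part); this follows from the corresponding property of $\mathcal{P}_1$ together with the fact that $\pi_2$ maps the attachment vertex of $P$ to the attachment vertex of $\pi_2(P)$, but it needs to be said. Second, ``$\pi_2(P)$ inherits the type of $P$'' deserves a brief argument in the type-$4$ case, since a priori a type-$4$ part of $\mathcal{P}_1$ split by $\mathcal{P}_2$ into several type-$4$ pieces could in principle project to something other than a difference of two subtrees in $\hat{\mathbf{T}}_2$; one should note that the caterpillars $Y_{Q_1},\dots,Y_{Q_k}$ for the $\mathcal{P}_2$-pieces of $P$ are chained head-to-tail in the construction of $\hat{\mathbf{T}}_2$, so their union is again of the form $(\hat T_2)_{\pi_2(v)}\setminus(\hat T_2)_{\pi_2(w)}$. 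With those two clarifications the sketch is a perfectly reasonable substitute for the proof the paper omits.
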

\begin{proof}
The tedious but easy check is omitted here.
\end{proof}

\subsection{Limits of Tree-Semilattices}

\begin{theorem}
\label{thm:limwato}
Every QF-convergent sequence $(\mathbf{T}_n, \mu_n)$  of  \ato{}s has
a {\mato} limit  $(\hat{\mathbf{T}},\hat{\mu})$.
\end{theorem}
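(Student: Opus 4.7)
The plan is to construct $(\hat{\mathbf{T}},\hat{\mu})$ as a projective limit of a coherent system of finite standard reductions of the $(\mathbf{T}_n,\mu_n)$, using the approximation machinery of the preceding subsection. For each $k\geq 1$ I first apply Lemma~\ref{lemma:epspartition} and iteratively Lemma~\ref{lemma:epspartition2} to each term of the sequence to obtain a refinement chain of $2^{-k}$-partition functions $f_n^{(k)} : \mathbf{T}_n \to \widetilde{\mathbf{T}}_n^{(k)}$ with $|\widetilde{T}_n^{(k)}|\le M(k)$ for some constant $M(k)$ independent of $n$. By Lemma~\ref{lemma:stdred} the associated standard reductions $\pi_n^{(k)} : \mathbf{T}_n \to \hat{\mathbf{T}}_n^{(k)}$ satisfy $|\hat{T}_n^{(k)}|\le C(C{+}1)M(k)$, by Lemma~\ref{lem:compose} the refinement maps $g_n^{(k)}$ yield natural projections $p_n^{(k)}: \hat{\mathbf{T}}_n^{(k)}\to \hat{\mathbf{T}}_n^{(k-1)}$, and by Lemma~\ref{lem:redsim} one has $\mathrm{dist}((\mathbf{T}_n,\mu_n),(\hat{\mathbf{T}}_n^{(k)},\hat{\mu}_n^{(k)}))<6\cdot 2^{-k}$.

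Next, for each fixed $k$ the underlying colored rooted tree of $\hat{\mathbf{T}}_n^{(k)}$ takes only finitely many isomorphism types, the weight vectors lie in a compact simplex, and the projection $p_n^{(k)}$ takes finitely many values; a diagonal extraction over $n$ therefore yields a subsequence along which, simultaneously for every $k$, all of these data stabilize to a finite colored weighted tree-semilattice $(\hat{\mathbf{L}}^{(k)},\hat{\nu}^{(k)})$ equipped with a limiting projection $p^{(k)}:\hat{\mathbf{L}}^{(k)}\to\hat{\mathbf{L}}^{(k-1)}$ that remains a weak homomorphism and pushes $\hat{\nu}^{(k)}$ to $\hat{\nu}^{(k-1)}$. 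I then realize $(\hat{\mathbf{T}},\hat{\mu})$ as the projective limit of $\{(\hat{\mathbf{L}}^{(k)},p^{(k)})\}_{k\ge 1}$: the domain $\hat T$ is the compact metrizable (hence standard Borel) space of coherent sequences $(x_k)\in\prod_k \hat{L}^{(k)}$ satisfying $p^{(k)}(x_k)=x_{k-1}$, the meet is computed coordinate-wise, the colors are pulled back from any component (on which they agree by coherence), and $\hat{\mu}$ is obtained from the Kolmogorov extension theorem applied to the consistent family $\{\hat{\nu}^{(k)}\}$.

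Finally I verify the limit property. The canonical projection $\pi^{(\infty,k)} : \hat{\mathbf{T}}\to\hat{\mathbf{L}}^{(k)}$ plays the role of a standard $f$-reduction with respect to a $2^{-k}$-partition of $\hat{\mathbf{T}}$, so Lemma~\ref{lem:redsim} applies verbatim in the Borel setting to give $|\langle\phi,\hat{\mathbf{T}}\rangle-\langle\phi,\hat{\mathbf{L}}^{(k)}\rangle|<p^2\cdot 2^{-k}$ for any $\phi\in{\rm QF}_p$. Combining this with the analogous finite-side bound of the first step and with the pointwise convergence $\langle\phi,\hat{\mathbf{T}}_n^{(k)}\rangle\to\langle\phi,\hat{\mathbf{L}}^{(k)}\rangle$ along the subsequence (which follows from the stabilization of the finite structures) yields $\langle\phi,\hat{\mathbf{T}}\rangle=\lim_n\langle\phi,\mathbf{T}_n\rangle$ for every quantifier-free $\phi$, and this limit exists by the assumed QF-convergence of the sequence.

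The main obstacle is the third step: one must verify that the coordinate-wise meet on the projective limit really defines a Borel tree-semilattice operation, and justify the use of Lemma~\ref{lem:redsim} at the infinite stage. The former is a first-order check preserved by projective limits of weak homomorphisms (the axioms of Remark~\ref{rem:gato} are witnessed in each finite $\hat{\mathbf{L}}^{(k)}$ and transfer to coherent sequences), while the latter holds because the diagonal-type exceptional set controlled in the proof of Lemma~\ref{lem:redsim} has measure at most $2^{-k}$ at each finite stage and this bound is preserved under the Kolmogorov extension.
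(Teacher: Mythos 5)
Your overall architecture coincides with the paper's: build standard reductions $\hat{\mathbf{T}}_n^{(k)}$ at precision $\epsilon_k$, extract a subsequence (diagonally over $n$) along which the finite target structures and bonding maps stabilize, pass to the projective limit, and obtain the measure as a projective (Kolmogorov-type) limit of the stabilized measures. The closing distance estimates are used in the same way; the paper re-derives $|\langle\phi,(\hat{\mathbf{T}},\hat\mu)\rangle-\langle\phi,(\hat{\mathbf{T}}_q,\hat\mu_q)\rangle|<p^2\epsilon_q/2$ directly from Lemma~\ref{lem:pi_isom} at the infinite stage rather than invoking Lemma~\ref{lem:redsim} verbatim, but that is presentational.

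The step you flag as the main obstacle is, however, a genuine gap, and the fix you sketch does not address it. The coordinate-wise meet does \emph{not} define a {\ato} on the projective limit because the sequence $(u_k\wedge v_k)_k$ is in general not coherent: a standard $f$-reduction $\pi$ satisfies $\pi(x\wedge y)=\pi(x)\wedge\pi(y)$ only when $f(x)\neq f(y)$, so the bonding maps $p^{(k)}$ commute with $\wedge$ only once the $k$-th partition has separated the two arguments. Concretely, take a root with many light leaf children; at level $k$ two such leaves $u,v$ may lie in a common type-3 part $P$ (so $u_k=v_k=b_{P,\gamma}$ and $u_k\wedge v_k=b_{P,\gamma}$), while at level $k+1$ they fall into distinct parts, so $u_{k+1}\wedge v_{k+1}$ is the image of the singular attachment vertex, and $p^{(k)}(u_{k+1}\wedge v_{k+1})\neq u_k\wedge v_k$. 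Thus $(u_k\wedge v_k)_k$ need not even lie in $\hat T$, and ``transferring the axioms of Remark~\ref{rem:gato} from each finite stage'' cannot begin. The paper's actual construction is a second, nontrivial one: it introduces a similarity relation $\sim$ on $\hat T$ (tracking coordinates that stay equal or remain brothers inside a type-4 part at every level), defines $(u\wedge v)_k$ coordinate-wise only from the first index $a$ with $u_a\not\sim v_a$ (coherence then holds because refinements keep $u$ and $v$ separated at all levels $\ge a$), pushes these values back along the $p^{(k)}$ for $k<a$, and in the degenerate case $u\sim v$ declares one of $u,v$ to be the meet by comparing colors. Verifying that this operation is Borel, associative, and satisfies the three-element condition of Remark~\ref{rem:gato} then requires a case analysis on the similarity pattern of the triple $\{x,y,z\}$; that construction and verification are the heart of the proof and are missing from your outline.
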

\begin{proof}
We consider a decreasing sequence $\epsilon_1>\epsilon_2>\dots$ of positive reals with limit $0$.
Using Lemma \ref{lemma:epspartition} and Lemma \ref{lemma:epspartition2}, we can construct a sequence $\mathcal{P}_{n,1},\mathcal{P}_{n,2},\dots$ of partitions such that $\mathcal{P}_{n,i}$ is an $\epsilon_i$-partition of $(\mathbf{T}_n, \mu_n)$ with at most $16/\epsilon_i$ vertices and $\mathcal{P}_{n,i+1}$ is a refinement of $\mathcal{P}_{n,i}$. 

For every fixed $i$ there are only a finite number of trees on $16/\epsilon_i$ vertices, so up to extracting subsequences (we do it sequentially starting with $i=1$), we can assume that all the $\epsilon_i$-partition functions corresponding to the $(\mathcal{P}_{n,i})_{n\in \bbbn}$ have the same target tree $\widetilde{\mathbf{T}_i}$.

And in fact the same finiteness argument is true also for standard reductions, so we can also assume that all the standard 
$f_{n,i}$-reductions $\pi_{n,i}$ have the same target 
$\hat{\mathbf T_i}$ (and partition map $\hat{f}_{i}$), and by a compactness argument we can also assume that
for each $i$ the measures $\hat{\mu}_{n,i}$ converge weakly to some measure $\hat{\mu}_i$.
We further denote by $g_{i}$ the map from $\widetilde{\mathbf{T}}_{i+1}$ to
$\widetilde{\mathbf{T}}_{i}$ witnessing that $\mathcal{P}_{i+1}$ is a refinement of $\mathcal P_i$, by $h_{i}$ the composition $g_{i}\circ \hat{f}_{i+1}$, and by $p_i:\hat{\mathbf{T}}_{i+1}\rightarrow\hat{\mathbf{T}}_i$ the standard $h_{i}$-reduction of $\hat{\mathbf{T}}_{i+1}$.

We define the set $\hat{T}$ as
$$
\hat{T}=\Bigl\{u\in\prod_{i\in\bbbn}\hat{T}_i:\quad (\forall n\in\bbbn)\  p_n(u_{n+1})=u_n\Bigr\},
$$
and we denote by $\zeta_i:\hat{T}\rightarrow\hat{T_i}$ the projection to the $n$th coordinate. Then we have the following diagram:

$$
\xy\xymatrix@C=20pt@R=20pt{
\hat{\mathbf{T}}
\ar@{..>}[drr]|{\zeta_{i+1}}\ar@{..>}@/^2ex/[drrrr]|{\zeta_i}\ar@{..>}@/^3.5ex/[drrrrrr]|{\zeta_{i-1}}\\
	&\ar[r]&\hat{\mathbf T}_{i+1}\ar[dd]|{\hat{f}_{i+1}}\ar[ddrr]|{h_{i}}\ar[rr]_{p_i}
	&&\hat{\mathbf T}_i\ar[dd]|{\hat{f}_i}\ar[ddrr]|{h_{i-1}}\ar[rr]_{p_{i-1}}
	&&\hat{\mathbf T}_{i-1}\ar[dd]|{\hat{f}_{i-1}}\ar[dr]\ar[r]
	&&\\
\dots&\ar[dr]&&&&&&&\dots\\
	&\ar[r]
	&\widetilde{\mathbf{T}}_{i+1}\ar[rr]_{g_i}
	&&\widetilde{\mathbf{T}}_{i}\ar[rr]_{g_{i-1}}
	&&\widetilde{\mathbf{T}}_{i-1}\ar[r]
	&&\\
}\endxy
$$

Note that $\hat{T}$ (with product topology) is a Cantor space, hence (with Borel $\sigma$-algebra) is a standard Borel space. 
We define the color $c(u)$ of $u\in\hat{T}$ as the color of $u_1$.
As all mappings $p_n$ are color-preserving, it follows that $c(u)$ is the color common to all of the $u_n$.

For $u,v\in\hat{T}$,and $n\in \bbbn$, we note $u_n \sim v_n$ if :
\begin{itemize}
\item$\hat{f}_n(u_n)=\hat{f}_n(v_n)$ 
\item either $u_n$ and $v_n$ are equal or $\hat{f}_n(v_n)$ is of type 4 and they are brothers (meaning they both don't belong to the spine and both are children of their infimum $u_n \wedge v_n$).
\end{itemize}
We then say that $u$ and $v$ are {\em similar} and we note $u\sim v$ if for every integer $n$ $u_n \sim v_n$. Note that $\sim$ is an equivalence relation on $\hat{T}$, and that
$$(u=v)\quad\iff\quad (u\sim v)\wedge(c(u)=c(v))$$

We define the mapping $\wedge:\hat{T}\times \hat{T}\rightarrow\hat{T}$ as follows:
Let $u,v\in\hat{T}$.
If $u\sim v$ then $u\wedge v$ is the one of $u,v$ with smaller color;
otherwise, denoting $a$ an integer such that either $\hat{f}_a(u_a)\neq \hat{f}_a(v_a)$ or $u_a$ and $v_a$ are neither equal or brothers, we define $(u\wedge v)_n=u_n\wedge v_n$ if $n\geq a$, and (inductively)
$(u\wedge v)_n=p_n((u\wedge v)_{n+1})$ if $n<a$.

The mapping $\wedge$ is clearly symmetric, and it is easily checked to be measurable and associative. Let $x,y,z\in\hat{T}$.
Assume there exists $i,j,k\in\bbbn$ such that
$x_i\not\sim  y_i$, $y_j\not\sim z_j$, and $x_k\not\sim y_k$. Let $\ell=\max(i,j,k)$. Then for every $n\geq\ell$ it holds $|\{(x\wedge y)_n,(x\wedge z)_n,(y\wedge z)_n\}|\leq 2$. Hence either for every $n\in\bbbn$ it holds
$|\{(x\wedge y)_n,(x\wedge z)_n,(y\wedge z)_n\}|=1$ (thus $x\wedge y=x\wedge z=y\wedge z$) or $|\{(x\wedge y)_n,(x\wedge z)_n,(y\wedge z)_n\}|=2$ for sufficiently large $n$, which implies  $|\{x\wedge y,x\wedge z,y\wedge z\}|=2$.

Otherwise, assume that there exists $i\in\bbbn$ such that
$x_i\not\sim  y_i$, but that $y_n\sim z_n$ holds for every $n\in\bbbn$. 
Then it is easily checked that for $n>i$ either 
it holds $x_n\wedge y_n=x_n\wedge z_n$, or that it holds
$x_n\wedge y_n=y_n$ and $x_n\wedge z_n=z_n$. In both cases we
have $|\{x\wedge y,x\wedge z,y\wedge z\}|\leq 2$.

Otherwise, $x_n\sim y_n$, $y_n\sim  z_n$ and $x_n\sim z_n$ hold for every $n\in\bbbn$. Then obviously $|\{x\wedge y,x\wedge z,y\wedge z\}|\leq 2$.

Altogether, according to Remark~\ref{rem:gato}, it follows that $\hat{\mathbf{T}}$ is a {\ato}.

For $i\in\bbbn$ we consider an arbitrary  probability measure 
$\hat{\lambda}_i$ on $\hat{T}$ with $\hat{\lambda}_i(\zeta_i^{-1}(x))= \hat{\mu}_{i}(\{x\})$ for every $x\in \hat{T}_i$. 
As all of the $p_n$ are measure preserving, we have
that for every $n> i$ and every $x\in \hat{T}_i$ it holds
\begin{align*}
\hat{\lambda}_n(\zeta_i^{-1}(x))&=
\hat{\lambda}_n\circ\zeta_n^{-1}\circ p_{n-1}^{-1}\circ\dots\circ p_i^{-1}(x)\\
&=\hat{\mu}_{n}\circ p_{n-1}^{-1}\circ\dots\circ p_i^{-1}(x)\\
&=\hat{\mu}_{i}(\{x\})\\
&=\hat{\lambda}_i(\zeta_i^{-1}(x))
\end{align*}

As the sets $\zeta_i^{-1}(\{x\})$ (for $i\in\bbbn$ and $x\in\hat{T}_i$) generate the open sets of $\hat{T}$,
it follows  that measures $\hat{\lambda}_i$ converge weakly to a probability measure $\hat{\mu}$ as $i$ grows to infinity.

Let $\phi$ be a quantifier-free formula with $p$ free variables.
Let $u^1,\dots,u^p$ be random elements of $\hat{\mathbf{T}}$ chosen independently with respect to probability measure $\hat{\mu}$.
With probability at least $1-\binom{p}{2}\epsilon_q$ no two distinct $u^i$'s have the same $\zeta_q$ projection and thus
$\hat{\mathbf{T}}\langle u^1,\dots,u^p\rangle$ is isomorphic to
$\hat{\mathbf{T}}_q\langle u^1_q,\dots,u^p_q\rangle$.
As in Lemma~\ref{lem:redsim}, we deduce that 
$$
|\langle\phi,(\hat{\mathbf{T}}, \hat{\mu})\rangle -
\langle\phi,(\hat{\mathbf{T}}_q, \hat{\mu}_q)\rangle|<p^2\epsilon_q/2.
$$
Moreover, as $n$ grows to infinity, $\hat{\mu}_{n,q}$ converges weakly to $\hat{\mu}_q$. It follows that there exists $n_0$ such that for every 
$n\geq n_0$ it holds
$$
|\langle\phi,(\hat{\mathbf{T}}_q, \hat{\mu}_q)\rangle
-\langle\phi,(\hat{\mathbf{T}}_q, \hat{\mu}_{n,q})\rangle|<\epsilon_q.
$$
Also, according to Lemma~\ref{lem:redsim}, it holds
$$
|\langle\phi,({\mathbf{T}}_n, \mu_n)\rangle
-\langle\phi,(\hat{\mathbf{T}}_q, \hat{\mu}_{n,q})\rangle|<p^2\epsilon_q/2.
$$
Thus it holds
$$
|\langle\phi,({\mathbf{T}}_n, \mu_n)\rangle
-\langle\phi,(\hat{\mathbf{T}}, \hat{\mu})\rangle|<(p^2+1)\epsilon_q.
$$

Hence $(\hat{\mathbf{T}},\hat{\mu})$ is a {\mato} limit
of the sequence $({\mathbf{T}}_n, \mu_n)_{n\in\bbbn}$.
\end{proof}

\begin{corollary}
\label{cor:limato}
Let $(\mathbf{T}_n, \mu_n)$ be a QF-converging sequence of uniform (finite) \mato{}s. Then there exists an atom-less limit $(\mathbf{T}, \mu)$.

\end{corollary}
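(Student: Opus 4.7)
The plan is to invoke Theorem~\ref{thm:limwato} to obtain some {\mato} limit $(\hat{\mathbf{T}}, \hat{\mu})$ of the sequence $(\mathbf{T}_n, \mu_n)$, and then to verify that the uniformity of the finite approximants forces $\hat{\mu}$ to be atomless. The key observation is that the atomic quantifier-free formula $\phi(x_1, x_2) := (x_1 = x_2)$, which belongs to ${\rm QF}_2$, measures the total mass concentrated on the diagonal: for any weighted Borel structure $(\mathbf{A}, \mu)$ one has
$$\langle \phi, (\mathbf{A}, \mu) \rangle \;=\; \mu^{\otimes 2}\bigl(\{(x,y) \in A^2 : x = y\}\bigr) \;=\; \sum_{v \in A} \mu(\{v\})^2.$$

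First I would evaluate $\phi$ on the finite uniform approximants. Since $\mu_n$ assigns mass $1/|T_n|$ to each element of $T_n$, we get $\langle \phi, (\mathbf{T}_n, \mu_n) \rangle = 1/|T_n|$. The case where $|T_n|$ remains bounded is degenerate: a subsequence is then eventually constant up to isomorphism, and one may replace each vertex by a longer and longer chain (as in the proof of Theorem~\ref{thm:uapprox}) to still produce an atomless Borel limit; so I may assume $|T_n| \to \infty$, which gives $\langle \phi, (\mathbf{T}_n, \mu_n) \rangle \to 0$.

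Next, QF-convergence of the sequence $(\mathbf{T}_n, \mu_n)$ to $(\hat{\mathbf{T}}, \hat{\mu})$ applied to $\phi$ yields
$$\hat{\mu}^{\otimes 2}\bigl(\{(x,y) \in \hat{T}^2 : x = y\}\bigr) \;=\; 0.$$
If $\hat{\mu}$ admitted an atom at some point $v$ with $\hat{\mu}(\{v\}) = \alpha > 0$, then the singleton $\{(v,v)\}$, which lies in the diagonal, would contribute $\alpha^2 > 0$ to this measure, a contradiction. Hence $\hat{\mu}$ is atomless.

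I do not anticipate any real obstacle here; the argument is the standard observation that uniform measures on finite sets of growing cardinality can only accumulate to atomless limits. The only minor point worth stating carefully is the handling of the (uninteresting) case where $|T_n|$ stays bounded, which is routine.
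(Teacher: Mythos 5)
Your main argument is correct and is essentially the one the paper uses implicitly: Theorem~\ref{thm:limwato} produces a {\mato} limit $(\hat{\mathbf{T}},\hat\mu)$, and applying QF-convergence to the formula $\phi(x_1,x_2)\equiv(x_1=x_2)$ gives $\hat\mu^{\otimes 2}(\Delta)=\lim_n 1/|T_n|$, which is $0$ when $|T_n|\to\infty$; an atom of mass $\alpha$ would contribute $\alpha^2>0$ to the diagonal, so $\hat\mu$ must be atomless. This is exactly the observation the authors make in the proof of Theorem~\ref{thm:limco} (``$\mu$ has no atom as $\lim_n\sup_v\mu_n(\{v\})=0$''), and for uniform measures the two quantities coincide.

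However, your treatment of the bounded case is wrong, and this is worth flagging because your own diagonal formula exposes the problem. If $|T_n|$ is bounded then, as you say, some subsequence is eventually constant at a finite $(\mathbf{T},\mu)$ with uniform measure, so the whole QF-convergent sequence has $\langle x_1=x_2,(\mathbf{T}_n,\mu_n)\rangle\to 1/|T|>0$. Any QF-limit $(\hat{\mathbf{T}},\hat\mu)$ must then satisfy $\hat\mu^{\otimes 2}(\Delta)=1/|T|>0$, so $\hat\mu$ necessarily has atoms; there is no atomless limit, and in fact by the QF-equivalence lemma in the paper the only finite limit is $\mathbf{T}$ itself. Your proposed fix --- replacing each vertex by a longer and longer chain as in Theorem~\ref{thm:uapprox} --- produces a structure that is \emph{not} a QF-limit of the original sequence: chain-blowups change the values of $\langle\phi,\cdot\rangle$ (already for $\phi(x_1,x_2)\equiv(x_1\wedge x_2=x_1)$), so they do not represent the sequence's limit. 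The correct resolution is simply that the corollary, like most statements in the graph/structure limit literature, is read with the standing convention $|T_n|\to\infty$ (the paper itself says ``a sequence of graphs with order going to infinity'' in the analogous relational setting); there is no way to rescue the conclusion when $|T_n|$ stays bounded.
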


\section{Applications: $m$-partite cographs}

A {\em cograph}, or {\em complement-reducible} graph, is a
graph that can be generated from $K_1$ by complementations and disjoint unions.
Some generalizations of cographs have been proposed;
e.g., bi-cographs \cite{Giakoumakis1997} or $k$-cographs \cite{Hung2011}.

A well known characterization of cographs is the following  \cite{Corneil1981}.
\begin{theorem}
A graph $G$ is a cograph if and only if there exists a rooted tree $T$ (called
{\em cotree}), whose set of leaves is $V(G)$, whose internal nodes are colored $0$ or $1$, and such that two vertices $u,v$ are adjacent in $G$ if and only if 
the color of their lowest common ancestor is $1$.
\end{theorem}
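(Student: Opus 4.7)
The plan is to prove the two implications separately, each by induction that mirrors the recursive definition of a cograph.

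For the \emph{if} direction, suppose $G$ has a cotree $T$; I will induct on the number of internal nodes of $T$. The base case is a single leaf, giving $G = K_1$. For the inductive step, let $r$ be the root of $T$ with color $c(r) \in \{0,1\}$, and let $T_1, \dots, T_k$ be the subtrees rooted at the children of $r$, each a cotree of the induced subgraph $G_i$ on its own leaves. Two leaves in distinct subtrees have $r$ as their lowest common ancestor, so they are adjacent in $G$ iff $c(r) = 1$; two leaves inside the same $T_i$ keep their LCA in $T_i$ and thus preserve their adjacency in $G_i$. Hence if $c(r) = 0$ then $G = G_1 \sqcup \cdots \sqcup G_k$, and if $c(r) = 1$ then $G = \overline{\overline{G_1}\sqcup\cdots\sqcup\overline{G_k}}$. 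Both constructions stay inside the class of cographs, so $G$ is a cograph by the inductive hypothesis together with closure under disjoint union and complementation.

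For the \emph{only if} direction, I will induct on the construction of $G$ from $K_1$ by disjoint unions and complementations. A single vertex receives the one-leaf cotree. If $G = G_1 \sqcup G_2$, I take cotrees $T_1, T_2$ of $G_1, G_2$ and glue them under a fresh root colored $0$: leaves lying in the same $T_i$ keep their former LCA, so their adjacency is preserved, while leaves in different subtrees have the new $0$-colored root as LCA, correctly indicating non-adjacency. If $G = \overline{G'}$, I take a cotree $T'$ of $G'$ and flip the color of every internal node to obtain $T$; this exchanges ``adjacent'' and ``non-adjacent'' at every LCA, which is exactly the effect of complementation.

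There is no serious obstacle here: the result is essentially a structural unpacking of the recursive definition of a cograph. The only point to check carefully is that in the disjoint-union step of the converse, attaching the subtrees $T_1, T_2$ under a new root does not disturb the LCAs of pairs lying in a common $T_i$, which is immediate since such an LCA sits strictly below the new root.
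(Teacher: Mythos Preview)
Your proof is correct; both directions are handled cleanly and the only delicate point (that gluing two cotrees under a new root does not alter lowest common ancestors inside each subtree) is explicitly addressed.

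There is, however, nothing to compare against: the paper does not prove this theorem at all. It is quoted as a ``well known characterization of cographs'' with a citation to Corneil, Lerchs, and Stewart~Burlingham~\cite{Corneil1981}, and is used only as background motivation for the definition of $m$-partite cographs that follows. So your argument is not an alternative to the paper's proof but rather a self-contained justification of a result the authors take for granted.
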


The following generalization was proposed in \cite{Ganian2012,Ganian2013}:

\begin{definition}[$m$-partite cograph]
\label{def:mpartite-cograph}
An {\em$m$-partite cograph} is a graph that admits an
{\em $m$-partite cotree} representation, that is a rooted tree $T$ such that
\begin{itemize}
\item
the leaves of $T$ are the vertices of $G$, and are colored by a label from
$\{1,\dots,m\}$,
\item
the internal nodes $v$ of $T$ are assigned symmetric functions
$f_v:\{1,\dots,m\}\times\{1,\dots, m\}\rightarrow \{0,1\}$ with the property
that two vertices $x$ and $y$ of $G$ with respective colours $i$ and $j$ are
adjacent iff their least common ancestor $v$ in $T$ has $f_v(i,j)=1$.
\end{itemize}
\end{definition}
(This formal definition is equivalent to the intuitive one given in Section~\ref{sec:res}.) Note that cographs are exactly $1$-partite cographs.

The class of cographs  
is random-free, as noticed
 by Janson~\cite{Janson2013}. The proof is based on the following  characterization of random-free hereditary classes of graphs given by Lov\'asz and Szegedy~\cite{Lovasz2010}. (Recall that a class $\mathcal C$ is {\em hereditary} is every induced subgraph of a graph in $\mathcal C$ is in $\mathcal C$.)

\begin{theorem}
\label{thm:vc}
A hereditary class of graphs $\mathcal C$ is random-free if and only if
there exists a bipartite graph $F$ with bipartition $(V_1,V_2)$ such that no graph obtained from $F$ by adding edges within $V_1$ and $V_2$ is in 
$\mathcal{C}$.
\end{theorem}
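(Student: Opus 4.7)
The plan is to prove the Lovász--Szegedy characterization in two directions, using the fact that the induced subgraph density of a fixed graph $H$ on $k$ vertices in a left-convergent sequence $(G_n)$ with graphon limit $W$ is given by
$$t_{\rm ind}(H,W)=\int_{[0,1]^k}\prod_{ij\in E(H)}W(x_i,x_j)\prod_{ij\notin E(H),\,i<j}\bigl(1-W(x_i,x_j)\bigr)\,\dd x_1\cdots\dd x_k.$$

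For the easy direction ($\Leftarrow$), I would assume such a bipartite $F$ on $(V_1,V_2)$ with $|V_1|=r$, $|V_2|=s$ exists, and argue by contradiction. Let $(G_n)$ be left-convergent in $\mathcal{C}$ with graphon limit $W$, and suppose $W$ is not $\{0,1\}$-valued almost everywhere. Then there is $\delta>0$ and a set $S\subseteq [0,1]$ of positive measure with $W(x,y)\in(\delta,1-\delta)$ on a positive measure subset of $S\times S$. Choose any extension $F'$ of $F$ by edges within $V_1$ and $V_2$; the integrand defining $t_{\rm ind}(F',W)$ is strictly positive on $S^{r+s}$ (after a suitable permutation), so $t_{\rm ind}(F',W)>0$. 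Thus for $n$ large, $G_n$ contains $F'$ as an induced subgraph; since $\mathcal{C}$ is hereditary, $F'\in\mathcal{C}$, contradicting the hypothesis on $F$.

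For the harder direction ($\Rightarrow$), I would argue contrapositively: assume for every bipartite $F$ on bipartition $(V_1,V_2)$, some extension of $F$ by edges added inside $V_1$ and $V_2$ lies in $\mathcal{C}$. For each $n$, pick a uniformly random bipartite graph $F_n$ with parts $V_1^{(n)},V_2^{(n)}$ of size $n$ and edge probability $1/2$ across, and let $G_n\in\mathcal{C}$ be an extension of $F_n$ guaranteed by the hypothesis. Since the extension only adds edges inside $V_1^{(n)}$ and $V_2^{(n)}$, the bipartite-crossing restriction of $G_n$ to $V_1^{(n)}\times V_2^{(n)}$ coincides with $F_n$. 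Extract a left-convergent subsequence of $(G_n)$ with graphon limit $W$. By standard concentration for random bipartite graphs, the crossing densities of $G_n$ between the two halves concentrate on the constant graphon $\tfrac12$; thus after a suitable measurable relabeling, $W$ equals $\tfrac12$ on a measurable rectangle of positive measure, which shows $W$ is not $\{0,1\}$-valued and $\mathcal{C}$ is not random-free.

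The main obstacle is the second direction: one must verify that when extracting the graphon limit, the crossing density between the two halves of the bipartition survives the relabeling step used to define convergence up to measure-preserving transformations. The key observation is that induced bipartite densities on $2k$ vertices split evenly across the two halves are invariant under adding edges within either half, so the densities of all bipartite patterns on $V_1\cup V_2$ of size $2k$ in $G_n$ are bounded below by those in $F_n$ (which concentrate on the quantities expected from the constant-$\tfrac12$ graphon). This yields a lower bound on $t_{\rm ind}(H,W)$ for every bipartite $H$, forcing $W$ to realize intermediate values on a positive measure set.
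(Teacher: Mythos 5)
First, note that the paper does not prove Theorem~\ref{thm:vc} at all: it is stated and used as a black box, cited directly from Lov\'asz and Szegedy \cite{Lovasz2010}. So your sketch has to be judged on its own terms, and it has genuine gaps in both directions. In the $(\Leftarrow)$ direction, the claim that the integrand of $t_{\rm ind}(F',W)$ ``is strictly positive on $S^{r+s}$'' does not follow from $W$ taking intermediate values on a positive-measure subset of $S\times S$. The induced density $t_{\rm ind}(F',W)$ involves \emph{every} pair $(x_i,x_j)$, including the within-part pairs, and there exist graphons that are not a.e.\ $\{0,1\}$-valued yet have $t_{\rm ind}(F',W)=0$ for particular small graphs $F'$ (take, e.g., $W(x,y)=\tfrac12$ when $|x-y|>\tfrac12$ and $W(x,y)=1$ otherwise: then the induced density of the empty triangle is exactly $0$). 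What you should bound is not a single $t_{\rm ind}(F',W)$ but the \emph{bipartite-pattern density} of $F$, i.e.\ $\sum_{F'} t_{\rm ind}(F',W)$ summed over all extensions $F'$; the within-part factors telescope and only cross pairs remain. To make even that positive you need a Lebesgue-density-point argument producing sets $A,B$ of positive measure on which $\{W\in(\delta,1-\delta)\}$ has density close to $1$, so that a random $r$-tuple in $A$ and $s$-tuple in $B$ has all $rs$ cross pairs intermediate with positive probability. Picking ``any'' extension $F'$ and ``any'' set $S$ does not work.

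In the $(\Rightarrow)$ direction the argument is incomplete exactly where you flag the obstacle. The limit graphon $W$ is defined only up to measure-preserving relabeling, so ``the crossing density between the two halves of the bipartition'' is not an invariant of $W$; you cannot simply assert $W$ is $\tfrac12$ on a rectangle. The invariants that do survive are again the bipartite-pattern densities of the various bipartite graphs $H$, and your calculation does show these are bounded below by the values expected from the constant-$\tfrac12$ graphon. But the concluding sentence --- that this ``forces $W$ to realize intermediate values on a positive measure set'' --- is precisely the hard content of the Lov\'asz--Szegedy proof, not a corollary of concentration: one must show that a $\{0,1\}$-valued graphon cannot have all $2^{k^2}$ bipartite $k\times k$ patterns of uniformly positive density, which is a VC-dimension/Sauer--Shelah counting argument. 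As written, your sketch names the right invariants but does not contain the step that actually finishes either direction.
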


It is well known that a cograph is a graph which does not contain the path $P_4$ on $4$ vertices as an induced subgraph. Similarly, the class of $m$-partite 
cographs is a (well quasi-ordered) hereditary class of graphs defined by a finite number of forbidden induced subgraphs $\mathcal{F}_m$, one of which is the path $P_{3(2^m-1)+1}$ of length $3(2^m-1)$ \cite{Ganian2012,Ganian2013}. This allows us to extend the result of Janson:

\begin{lemma}
\label{lem:mcorf}
For every integer $m$, the class of all $m$-partite cographs is hereditary and random-free.
\end{lemma}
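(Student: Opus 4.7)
The hereditariness follows directly from the cotree representation. Given an $m$-partite cograph $G$ with $m$-partite cotree $T$ and a vertex subset $V'\subseteq V(G)$, I would restrict $T$ to the smallest subtree spanning the leaves in $V'$, suppress the internal nodes of out-degree one produced by this restriction, and keep the leaf colouring and the symmetric functions at surviving internal nodes. The resulting tree is an $m$-partite cotree for $G[V']$, so the class is closed under taking induced subgraphs.

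For the random-freeness the plan is to apply Theorem~\ref{thm:vc}: it suffices to exhibit a bipartite graph $F=(V_1\cup V_2,E)$ such that no graph obtained from $F$ by adding edges inside $V_1$ and inside $V_2$ is an $m$-partite cograph. The key input is that $\mathcal{F}_m$ contains the path $P_N$ with $N=3(2^m-1)+1$, so it is enough to construct $F$ so that every ``intra-completion'' contains $P_N$ as an induced subgraph. I would take $F$ of Ramsey-theoretic size built around a bipartite ``path-like'' skeleton, then apply Ramsey's theorem to the intra-class edges of an arbitrary completion to reduce to one of finitely many canonical uniform configurations (each corresponding to a choice of ``all/none'' for the intra-edges within each class), and verify case-by-case that each canonical configuration still contains an induced copy of $P_N$, thus certifying that the completion is outside the class.

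The main obstacle is the choice of $F$: naive candidates are insufficient. The bipartite path $P_M$ with its natural parity bipartition admits intra-completions that are themselves cographs -- completing $P_5$ by the chords $v_1v_3$ and $v_3v_5$ produces the join of $v_3$ with two disjoint edges, which is a cograph -- and even when an induced $P_4$ is forced (for $m=1$), the case where both intra-classes become cliques kills every induced $P_k$ with $k\ge 5$ by a short parity argument (any non-edge must straddle the bipartition, forcing $u_3,u_4,u_5$ to share a parity opposite to $u_1$, which then makes $u_3u_5$ an intra-clique chord), so this choice cannot suffice for $m\ge 2$. A successful $F$ therefore needs more ``width'' than a single path; I would aim either for a bipartite blow-up of $P_{N'}$ with $N'$ much larger than $N$ (the blow-up parameter and the Ramsey quantifier controlled by $m$), or for a bipartite pattern extracted from the incidence structure of the forbidden family $\mathcal{F}_m$, in the spirit of Janson's treatment of ordinary cographs. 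Verifying that the enriched $F$ resists every intra-completion, especially in the all-intra-clique subcase, is the technical heart of the lemma.
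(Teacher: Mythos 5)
Your hereditariness argument matches the paper's (deleting leaves of the cotree suffices), so that half is fine. But your plan for random-freeness has a fatal flaw that no amount of ``width'' can repair. You aim to build a bipartite $F=(V_1\cup V_2,E)$ so that every intra-completion of $F$ contains the path $P_N$, $N=3(2^m-1)+1$, as an induced subgraph. However, if both $V_1$ and $V_2$ are completed to cliques, the resulting graph has independence number at most $2$ (an independent set meets each side at most once), while an induced $P_5$ already requires three pairwise non-adjacent vertices. So for every $m\ge 2$ (where $N\ge 10$), the all-cliques completion of \emph{any} bipartite graph --- a long path, a blow-up of a path, or anything else --- contains no induced $P_N$ at all. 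The difficulty you flag in the ``all-intra-clique subcase'' is not a technical hurdle but a categorical impossibility, and the strategy of locating a forbidden path inside the completion cannot go through.

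The paper's proof sidesteps this by running the reduction in the opposite direction. Take $F$ to be the path $P$ on $3(2^{2m}-1)+1$ vertices (note the $2m$), with parity bipartition $(V_1,V_2)$, and suppose toward a contradiction that some intra-completion $H$ admits an $m$-partite cotree $Y$. Double the colour palette: shift the colour of every $V_1$-leaf by $+m$, and replace each internal label $f_v$ by $\hat f_v$ defined to agree with $f_v$ on cross pairs $(i,m+j)$, $(m+i,j)$ and to be identically $0$ on same-side pairs $(i,j)$, $(m+i,m+j)$. The modified decorated tree is a $2m$-partite cotree whose interpretation is $P$ itself: the doubled colours discriminate the two sides, and $\hat f_v$ filters out every intra-side adjacency, regardless of which intra-edges were added. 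This contradicts $P_{3(2^{2m}-1)+1}\in\mathcal{F}_{2m}$. The key idea you were missing is to use the hypothetical cotree of $H$ to manufacture a cotree for $P$, rather than hunting for a forbidden pattern inside $H$; this is exactly what neutralizes the all-cliques case.
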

\begin{proof}
That the class of $m$-partite cographs is hereditary is immediate from the existence of an $m$-partite representation, as deleting leaves in the representation gives a representation of the corresponding induced subgraph.

The path $P$ of length $3(2^{2m}-1)$ is bipartite. Let $(V_1,V_2)$ be a bipartition of its vertex set. Assume for contradiction that a graph $H$ obtained from $P$ by adding edges within $V_1$ and $V_2$ is an $m$-partite cograph. Let $Y$ be an $m$-partite cotree representation of $H$, and assume the laves are colored $1,\dots,m$. Recolor each leaf in $V_1$ by adding $m$ to its color ($c$ becomes $c+m$). For every internal node $v$, the color assigned to $v$ corresponds to a function $f_v:\{1,\dots,m\}\times\{1,\dots,m\}\rightarrow\{0,1\}$. Replace $f_v$ by the function $\hat{f}_v:\{1,\dots,2m\}\times\{1,\dots,2m\}\rightarrow\{0,1\}$ defined as follows: for every $1\leq i,j\leq m$, let
\begin{align*}
\hat{f}_v(i,j)=\hat{f}_v(m+i,m+j)&=0\\
\hat{f}_v(m+i,j)=\hat{f}_v(i,m+j)&=f_v(i,j).
\end{align*}
Then it is easily checked that we just constructed a $2m$-cotree representation of $P$, what contradicts the fact that $P$ is not a $2m$-partite cograph. It follows that no graph $H$ obtained from $P$ by adding edges within $V_1$ and $V_2$ is an $m$-partite cograph.
Hence, according to Theorem~\ref{thm:vc}, the class of all $m$-partite cographs is random-free.
\end{proof}

It follows from Lemma~\ref{lem:mcorf} and results in \cite{Lovasz2010} that every left-convergent sequence of $m$-partite cographs has a limit which is equivalent to a Borel graph excluding (as induced subgraphs) every graph in $\mathcal{F}_m$. Thus it holds
\begin{proposition}
\label{prop:excl}
A Borel graph is the QF-limit of a sequence of finite $m$-partite cographs if and only if it excludes all the (finite) graphs in the (finite) set $\mathcal{F}_m$ as induced subgraphs.
\end{proposition}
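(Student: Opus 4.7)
Both directions of this biconditional are essentially corollaries of material already in place, so the proof amounts to assembling the right ingredients.

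For the forward direction, suppose $G$ is the QF-limit of finite $m$-partite cographs $G_1,G_2,\dots$. For each $F\in\mathcal{F}_m$ on vertex set $\{1,\dots,k\}$, there is a quantifier-free formula $\phi_F(x_1,\dots,x_k)$ expressing that the induced subgraph on $(x_1,\dots,x_k)$ is isomorphic to $F$. Since every $G_n$ is an $m$-partite cograph, $\langle\phi_F,G_n\rangle=0$ for every $n$, and QF-convergence forces $\langle\phi_F,G\rangle=0$. As $\mathcal{F}_m$ is finite, this is exactly the sense in which the Borel graph $G$ excludes every $F\in\mathcal{F}_m$ as an induced subgraph (cf.\ the statement of Theorem~\ref{thm:char}).

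For the backward direction, suppose $G$ is a Borel graph excluding every $F\in\mathcal{F}_m$. By Lemma~\ref{lem:mcorf} the class of $m$-partite cographs is hereditary and random-free; combined with the fact that $\mathcal{F}_m$ is a finite family of forbidden induced subgraphs characterizing this class, the theory of random-free hereditary classes from \cite{Lovasz2010} implies that the closure of the class of $m$-partite cographs under left-convergence is exactly the family of Borel graphs avoiding every graph of $\mathcal{F}_m$. In particular, $G$ is the QF-limit of some sequence of finite $m$-partite cographs. This is the most direct argument, and it is the route indicated by the remark immediately preceding the statement.

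A second, more self-contained route, and the one most in line with the present paper, is to go through the tree-semilattice machinery. By Theorem~\ref{thm:char}, $G$ is (isomorphic mod $0$ to) the interpretation of an atomless Borel colored tree-semilattice $(\mathbf{T},\mu)$. By the colored analog of Theorem~\ref{thm:uapprox}, $(\mathbf{T},\mu)$ is itself the QF-limit of a sequence of uniform finite colored tree-semilattices $(\mathbf{T}_n,\mu_n)$, whose interpretations $G_n$ are finite $m$-partite cographs by construction. Since adjacency in the $m$-partite cograph interpretation is defined by a quantifier-free formula in the colored tree-semilattice signature (it depends only on the colors of $x$, $y$, and $x\wedge y$), any QF formula $\psi$ on graphs translates to a QF formula $\psi^{*}$ on colored tree-semilattices with $\langle\psi,G_n\rangle=\langle\psi^{*},\mathbf{T}_n\rangle$ and $\langle\psi,G\rangle=\langle\psi^{*},\mathbf{T}\rangle$. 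Hence QF-convergence of $(\mathbf{T}_n,\mu_n)$ to $(\mathbf{T},\mu)$ transfers to QF-convergence of $(G_n)$ to $G$. The only delicate step in this second approach is the invocation of Theorem~\ref{thm:char}; once that is granted, everything else is routine.
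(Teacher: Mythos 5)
Your forward direction and your first backward route together reproduce the paper's argument: the paper proves this proposition in a single sentence by combining Lemma~\ref{lem:mcorf} (the class is hereditary and random-free) with the Lov\'asz--Szegedy results in \cite{Lovasz2010} on left-limits of hereditary classes defined by finitely many forbidden induced subgraphs, and that is exactly what your ``Route~1'' does. Your direct verification that $\langle\phi_F,G_n\rangle=0$ passes to the limit is a nice way to make the forward implication concrete, with the understanding (also implicit in the paper) that ``excludes $F$'' for a Borel graph is meant up to measure zero, since $\langle\phi_F,G\rangle=0$ only gives that the set of $k$-tuples inducing $F$ is $\mu^{\otimes k}$-null, and one still invokes \cite{Lovasz2010} to pass to an equivalent representative genuinely omitting $F$.

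However, your ``Route~2'' for the backward direction is circular and should be discarded. You invoke Theorem~\ref{thm:char} to pass from ``$G$ excludes $\mathcal{F}_m$'' to ``$G$ is the interpretation of an atomless colored Borel tree-semilattice,'' but in the paper Theorem~\ref{thm:char} is stated as a \emph{consequence} of Proposition~\ref{prop:excl} together with Theorem~\ref{thm:limco}; the implication $(3)\Rightarrow(4)$ in Theorem~\ref{thm:char} is obtained precisely by composing $(3)\Rightarrow(1)$ (which is the backward direction of Proposition~\ref{prop:excl}) with $(1)\Rightarrow(4)$ (which is Theorem~\ref{thm:limco}). So you cannot use Theorem~\ref{thm:char} here without first having proved the very proposition you are trying to establish. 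Stick with Route~1: the proof of Proposition~\ref{prop:excl} really does rest on Lemma~\ref{lem:mcorf} plus the external results of \cite{Lovasz2010}, and that is how the paper intends it.
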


In order to deal more easily with QF-limits, we consider the weighted colored {\ato} $(\mathbf T,\mu)$ corresponding to the rooted tree $T$, where $\mu$ is null on internal vertices of $T$, and uniform on the leaves of $T$. The  the {\em interpretation} $\mathsf{I}(\mathbf{T},\mu)$ of a colored {\mato}
 $(\mathbf{T},\mu)$ is the Borel graph $(G,\nu)$ whose vertex set is the support of $\mu$,  where $\nu$ is the restriction of $\mu$ to its support, where $x$ and $y$ are adjacent if $\Phi(x,y)$ holds, where $\Phi$ is the quantifier-free formula asserting that for some $1\leq i,j\leq m$ and $f\in \{0,1\}^{\{1,\dots,m\}\times\{1,\dots,m\}}$ with $f(i,j)=1$ it holds that  $x$ has color $i$, $y$ has color $j$, and $x\wedge y$ has color $f$. Note that $\nu$ is a probability measure on the vertex set of $G$ as the domain of $\mathbf{T}$ is a standard Borel space (from what follows that $\mu({\rm Supp}(\mu))=1$).
 
  Is is easily checked that if $(\mathbf{T},\mu)$
 is the finite weighted colored {\ato} $(\mathbf T,\mu)$ corresponding to the $m$-partite cotree representation of a finite $m$-partite cograph $G$ then $\mathsf{I}(\mathbf{T},\mu)=G$.

It is easily checked  that that for every quantifier free formula $\psi$ there exists a quantifier free formula $\psi^*$ such that for every colored {\mato}
$(\mathbf{T},\mu)$ it holds
$$
\langle\psi,\mathsf{I}(\mathbf{T},\mu)\rangle=\langle\psi^*,(\mathbf{T},\mu)\rangle.
$$

\begin{theorem}
\label{thm:limco}
Limits of $m$-partite cographs are exactly interpretations by 
$\mathsf I$ of atomless colored \mato{}s.
\end{theorem}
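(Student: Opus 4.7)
My plan is to handle the two inclusions separately, using the translation $\phi\mapsto\phi^*$ observed just before the theorem, which satisfies $\langle\phi,\mathsf I(\mathbf A,\nu)\rangle=\langle\phi^*,(\mathbf A,\nu)\rangle$ for every colored {\mato} $(\mathbf A,\nu)$ and every quantifier-free graph formula $\phi$. This translation turns QF-convergence of tree-semilattices into QF-convergence (equivalently, left-convergence) of their interpretations, and it is the common workhorse of both directions.

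For the ``every interpretation is a limit'' direction, start from an atomless colored {\mato} $(\mathbf T,\mu)$. For each $N$, form the $(N,1)$-sampling $\mathbf B_N$ of $(\mathbf T,\mu)$ supplied by Corollary~\ref{cor:approx}; Remark~\ref{rem:gato} ensures that $L=1$ already closes the sampling under generated infima. Because $\mu$ is atomless, the $N$ sampled points $X_1,\dots,X_N$ are almost surely distinct, they carry exactly the colors they have in $\mathbf T$, and their pairwise infima in $\mathbf B_N$ agree with those computed in $\mathbf T$. Hence $\mathsf I(\mathbf B_N)$ is precisely the graph on $\{X_1,\dots,X_N\}$ cut out by $\Phi$, that is, the finite $m$-partite cograph read off the sampled cotree (isolated vertices coming from points whose $\mathbf T$-color is not a leaf label do not obstruct the cograph status). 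Corollary~\ref{cor:approx} applied to $\phi^*$ then gives, for every $\phi$ and every $\epsilon>0$, some $N$ for which
\[
\bigl|\langle\phi,\mathsf I(\mathbf B_N)\rangle-\langle\phi,\mathsf I(\mathbf T,\mu)\rangle\bigr|
=\bigl|\langle\phi^*,\mathbf B_N\rangle-\langle\phi^*,(\mathbf T,\mu)\rangle\bigr|<\epsilon,
\]
so $(\mathsf I(\mathbf B_N))_N$ left-converges to $\mathsf I(\mathbf T,\mu)$.

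For the converse direction, take a left-convergent sequence $(G_n)_{n\in\bbbn}$ of finite $m$-partite cographs and, for each $n$, fix an $m$-partite cotree regarded as a finite colored tree-semilattice $(\mathbf T_n,\mu_n)$ with $\mu_n$ uniform on the leaves (the vertices of $G_n$) and zero on the internal nodes. The colour palette is a fixed finite set (the $m$ leaf labels together with the $2^{\binom{m+1}{2}}$ symmetric $\{0,1\}$-valued functions on $\{1,\dots,m\}^2$), so only countably many quantifier-free formulas are relevant, and a standard diagonal extraction on $[0,1]$ lets me pass to a subsequence along which $(\mathbf T_n,\mu_n)$ is itself QF-convergent. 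Theorem~\ref{thm:limwato} then supplies a {\mato} limit $(\hat{\mathbf T},\hat\mu)$, and translating back yields
\[
\langle\phi,G_n\rangle=\langle\phi^*,(\mathbf T_n,\mu_n)\rangle\longrightarrow\langle\phi^*,(\hat{\mathbf T},\hat\mu)\rangle=\langle\phi,\mathsf I(\hat{\mathbf T},\hat\mu)\rangle,
\]
so $\mathsf I(\hat{\mathbf T},\hat\mu)$ is the left-limit of $(G_n)$.

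The one ingredient that does not come for free, and that I expect to be the main obstacle, is the atomlessness of $\hat\mu$ in the converse direction, since Theorem~\ref{thm:limwato} does not guarantee it by itself. To take care of this, observe that the quantifier-free formula $\phi_{=}(x_1,x_2):=(x_1=x_2)$ satisfies $\langle\phi_{=},(\mathbf T_n,\mu_n)\rangle=\sum_v\mu_n(\{v\})^2\le 1/|G_n|$; when $|G_n|\to\infty$ this passes to $\int\hat\mu(\{x\})\,\dd\hat\mu(x)=0$, which forces $\hat\mu(\{x\})=0$ for $\hat\mu$-almost every $x$ and therefore rules out atoms of $\hat\mu$. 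The degenerate case where $|G_n|$ stays bounded is dealt with by observing that the limit is then a finite $m$-partite cograph $G$, which can itself be represented as $\mathsf I$ of an atomless Borel blow-up of its cotree, obtained by replacing each leaf by an atomless fibre of the same color placed under a fresh internal node labeled by the zero function (so that no spurious edges are introduced).
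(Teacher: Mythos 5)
Your proof is correct and follows the same two-pronged strategy as the paper: sampling (via Corollary~\ref{cor:approx}) to approximate an interpretation by finite $m$-partite cographs, and in the other direction a compactness extraction of a QF-convergent subsequence of cotrees followed by Theorem~\ref{thm:limwato} and the translation $\phi\mapsto\phi^*$. Where the paper simply notes $\lim_n\sup_v\mu_n(\{v\})=0$ (tacitly assuming $|G_n|\to\infty$), you justify atomlessness more carefully via $\phi_{=}$ and also treat the degenerate bounded case, which is a welcome tightening of the same argument rather than a genuinely different route.
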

\begin{proof}
Let $(G_n)_{n\in\bbbn}$ be a left-convergent (hence QF-convergent) sequence of $m$-partite cographs. Let $(\mathbf T_n,\mu_n)$ be a colored weighted {\ato} such that $G_n=\mathsf{I}(\mathbf T_n,\mu_n)$.
By compactness, there exists an increasing function $f:\bbbn\rightarrow\bbbn$ such that the sequence $(\mathbf T_{f(n)},\mu_{f(n)})_{n\in\bbbn}$ is ${\rm QF}$-convergent. Let $(\hat{\mathbf T},\mu)$ be a ${\rm QF}$ limit {\mato} for the sequence $(\mathbf T_{f(n)},\mu_{f(n)})_{n\in\bbbn}$. 
Note that $\mu$ has no atom as $\lim_{n\rightarrow\infty}\sup_{v\in T_n}\mu_n(\{v\})=0$.
Define the Borel graph $(G,\nu)$ as $G=\mathsf{I}(\hat{\mathbf T},\mu)$.  For every quantifier free formula $\psi$, it holds
\begin{align*}
\lim_{n\rightarrow\infty}\langle\psi, G_n\rangle
&=\lim_{n\rightarrow\infty}\langle\psi, G_{f(n)}\rangle\\
&=\lim_{n\rightarrow\infty}\langle\psi, \mathsf{I}(\mathbf{T}_{f(n)},\mu_{f(n)})\rangle\\
&=\lim_{n\rightarrow\infty}\langle\psi^*,(\mathbf{T}_{f(n)},\mu_{f(n)})\rangle\\
&=\langle\psi^*,(\mathbf{T},\mu)\rangle\\
&=\langle\psi,\mathsf{I}(\mathbf{T},\mu)\rangle
\end{align*}
It follows that the Borel graph $\mathsf{I}(\mathbf{T},\mu)$ is the QF-limit of the sequence $(G_n)_{n\in\bbbn}$. To this Borel graph corresponds a random-free graphon, which is thus the left limit 
of the sequence $(G_n)_{n\in\bbbn}$.

Conversely, if $G=\mathsf{I}(\hat{\mathbf T},\mu)$ is the interpretation of an atomless colored {\mato} $(\mathbf{T},\mu)$,
then, using a construction analogous to the one use in the proof of Theorem~\ref{cor:approx}, one gets that $(\mathbf{T},\mu)$ is the QF-limit of a sequence of  finite weighted colored {\ato}s $(\mathbf{T}_n,\mu_n)$, where $\mu_n$ is uniform on its support. Then for every integer $n$ the interpretation $\mathsf{I}(\mathbf{T}_n,\mu_n)$ is a finite $m$-partite cograph (with uniform measure on its vertex set) and,
as above, $G$ is the limit of the sequence $(G_n)_{n\in\bbbn}$.
\end{proof}

Thus, according to Proposition~\ref{prop:excl}, we get the following extension of the characterization of finite $m$-partite cographs:

\begin{theorem}
\label{thm:char}
For an atomless Borel graph $G$ the following conditions are equivalent:
\begin{enumerate}
\item $G$ is the QF-limit of a sequence of finite $m$-partite cographs;
\item $G$ is equivalent to a random-free graphon $W$ that is the left limit of a sequence of finite $m$-partite cographs;
\item $G$ excludes all graphs in $\mathcal{F}_m$ as induced subgraphs;
\item $G$ is the interpretation by 
$\mathsf I$ of an atomless colored {\mato}; 
\end{enumerate}
\end{theorem}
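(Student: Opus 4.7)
The plan is to assemble the statement from results already proved in the paper, using condition $(1)$ as the hub. Indeed, the equivalence $(1) \Leftrightarrow (3)$ is exactly Proposition~\ref{prop:excl}, and the equivalence $(1) \Leftrightarrow (4)$ is exactly Theorem~\ref{thm:limco}, so the only work left is to link $(2)$ into this chain. The glue is the two facts recalled in the introduction: on the one hand, QF-convergence of graphs coincides with left-convergence, and on the other hand a random-free graphon is, up to isomorphism mod~$0$, the same datum as a Borel graph equipped with an atomless probability measure.

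For $(1) \Rightarrow (2)$, I would take $(G_n)_{n\in\bbbn}$ a sequence of finite $m$-partite cographs with $G$ as its QF-limit. By the coincidence of QF- and left-convergence, $(G_n)$ is left-convergent to some graphon $W$. By Lemma~\ref{lem:mcorf}, the class of $m$-partite cographs is random-free, so $W$ is a random-free graphon. Read through the Borel-graph/random-free-graphon correspondence, $W$ is then carried by an atomless Borel graph that realizes the same values $\langle\psi,\cdot\rangle$ for every quantifier-free $\psi$ as $G$ does, i.e.\ $G$ is equivalent to $W$. Conversely, for $(2) \Rightarrow (1)$, if $G$ is equivalent to a random-free graphon $W$ which is the left limit of a sequence $(G_n)$ of finite $m$-partite cographs, then the same coincidence of convergences shows that $(G_n)$ is QF-convergent, and the QF-limit (an atomless Borel graph produced by Theorem~\ref{thm:limco}) must, by quantifier-free indistinguishability, coincide with $G$; hence $(1)$ holds.

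The only delicate point is keeping the several equivalences compatible: the identification "random-free graphon $\leftrightarrow$ atomless Borel graph" must be set up so that it preserves the values $\langle\psi,\cdot\rangle$ for all quantifier-free $\psi$ simultaneously in both directions, which is precisely what makes the word ``equivalent'' in $(2)$ match the notions of QF-limit in $(1)$ and interpretation in $(4)$. Once this bookkeeping is in place, the theorem follows by direct chaining: $(3) \Leftrightarrow (1) \Leftrightarrow (4)$ via Proposition~\ref{prop:excl} and Theorem~\ref{thm:limco}, and $(1) \Leftrightarrow (2)$ via Lemma~\ref{lem:mcorf} together with the equivalence of QF- and left-convergence on graphs.
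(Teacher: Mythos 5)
Your proposal is correct and takes essentially the same route as the paper: the paper states Theorem~\ref{thm:char} as an immediate assembly of Proposition~\ref{prop:excl} ($(1)\Leftrightarrow(3)$), Theorem~\ref{thm:limco} ($(1)\Leftrightarrow(4)$), and the random-free graphon/atomless Borel graph correspondence together with Lemma~\ref{lem:mcorf} (for linking in $(2)$), which is exactly the chaining you describe. Your explicit attention to the bookkeeping point --- that the graphon/Borel-graph identification must preserve all quantifier-free Stone pairings simultaneously --- is the right place to be careful and matches the paper's (implicit) use of the fact that QF-convergence of graphs coincides with left-convergence.
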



\begin{thebibliography}{10}

\bibitem{AldousICM}
D.~Aldous, \emph{Exchangeability and continuum limits of discrete random
  structures}, Proceedings of the International Congress of Mathematicians
  (Hindustan~Book Agency, ed.), vol.~{\rm I}, 2010, pp.~141--153.

\bibitem{Aldous2006}
D.~Aldous and R.~Lyons, \emph{Processes on unimodular random networks},
  Electronic Journal of Probability \textbf{12} (2007), 1454--1508.

\bibitem{Austin2008}
T.~Austin, \emph{On exchangeable random variables and the statistics of large
  graphs and hypergraphs}, Probability Surveys \textbf{5} (2008), 80--145.

\bibitem{Benjamini2001}
I.~Benjamini and O.~Schramm, \emph{Recurrence of distributional limits of
  finite planar graphs}, Electron. J. Probab. \textbf{6} (2001), no.~23, 13pp.

\bibitem{Borgs20081801}
C.~Borgs, J.T. Chayes, L.~Lov\'asz, V.T. S\'os, and K.~Vesztergombi,
  \emph{Convergent sequences of dense graphs {I}: Subgraph frequencies, metric
  properties and testing}, Adv. Math. \textbf{219} (2008), no.~6, 1801--1851.

\bibitem{Borgs2012}
C.~Borgs, J.T. Chayes, L~Lov\'asz, V.T. S{\'o}s, and K.~Vesztergombi,
  \emph{Convergent sequences of dense graphs {II}: {M}ultiway cuts and
  statistical physics}, Annals of Mathematics \textbf{176} (2012), 151--219.

\bibitem{brightwell2010continuum}
G.~Brightwell and N.~Georgiou, \emph{Continuum limits for classical sequential
  growth models}, Random Structures \& Algorithms \textbf{36} (2010), no.~2,
  218--250.

\bibitem{Corneil1981}
D.G Corneil, H.~Lerchs, and L.~Stewart~Burlingham, \emph{Complement reducible
  graphs}, Discrete Applied Mathematics \textbf{3} (1981), no.~3, 163--174.

\bibitem{pre05504139}
P.~Diaconis and S.~Janson, \emph{Graph limits and exchangeable random graphs.},
  Rend. Mat. Appl., VII. Ser. \textbf{28} (2008), no.~1, 33--61 (English).

\bibitem{ElekSze}
G.~Elek and B.~Szegedy, \emph{Limits of hypergraphs, removal and regularity
  lemmas. {A} non-standard approach}, arXiv:0705.2179v1 [math.CO], 2007.

\bibitem{Ganian2013}
R.~Ganian, P.~Hlin\v{e}n\'y, J.~Ne{\v s}et{\v r}il, J.~Obdr\v{z}\'alek, and
  P.~Ossona~de Mendez, \emph{When trees grow low: Shrub-depth and $m$-partite
  cographs},  (2013), submitted.

\bibitem{Ganian2012}
R.~Ganian, P.~Hlin\v{e}n\'y, J.~Ne{\v s}et{\v r}il, J.~Obdr\v{z}\'alek,
  P.~Ossona~de Mendez, and R.~Ramadurai, \emph{When trees grow low: Shrubs and
  fast $\mathrm{MSO}_1$}, MFCS 2012, Lecture Notes in Computer Science, vol.
  7464, Springer-Verlag, 2012, pp.~419--430.

\bibitem{Giakoumakis1997}
V.~Giakoumakis and J.-M. Vanherpe, \emph{Bi-complement reducible graphs}, Adv.
  Appl. Math. \textbf{18} (1997), 389--402.

\bibitem{hladky2015poset}
J.~Hladk{\`y}, A.~M{\'a}th{\'e}, V.~Patel, and O.~Pikhurko, \emph{Poset limits
  can be totally ordered}, Transactions of the American Mathematical Society
  \textbf{367} (2015), no.~6, 4319--4337.

\bibitem{Hoover1979}
D.~Hoover, \emph{Relations on probability spaces and arrays of random
  variables}, Tech. report, Institute for Advanced Study, Princeton, NJ, 1979.

\bibitem{Hoppen201393}
C.~Hoppen, Y.~Kohayakawa, C.G Moreira, B.~R{\'a}th, and R.M. Sampaio,
  \emph{Limits of permutation sequences}, Journal of Combinatorial Theory,
  Series B \textbf{103} (2013), no.~1, 93 -- 113.

\bibitem{hoppen2011limits}
C.~Hoppen, Y.~Kohayakawa, C.G. Moreira, and R.M. Sampaio, \emph{Limits of
  permutation sequences through permutation regularity}, arXiv:1106.1663, 2011.

\bibitem{Hung2011}
L.-J. Hung and T.~Kloks, \emph{{$k$}-cographs are {K}ruskalian}, Chicago
  Journal of Theoretical Computer Science \textbf{2011} (2011), 1--11, Article
  2.

\bibitem{janson2011poset}
S.~Janson, \emph{Poset limits and exchangeable random posets}, Combinatorica
  \textbf{31} (2011), no.~5, 529--563.

\bibitem{Janson2013}
\bysame, \emph{Graph limits and hereditary properties}, arXiv:1102.3571, Mar
  2013.

\bibitem{Kallenberg2005}
O.~Kallenberg, \emph{Probabilistic symmetries and invariance principles},
  Probability and Its Applications, Springer, 2005.

\bibitem{LovaszBook}
L~Lov\'asz, \emph{Large networks and graph limits}, Colloquium Publications,
  vol.~60, American Mathematical Society, 2012.

\bibitem{Lov'asz2006}
L.~Lov\'asz and B.~Szegedy, \emph{Limits of dense graph sequences}, J. Combin.
  Theory Ser. B \textbf{96} (2006), 933--957.

\bibitem{Lovasz2010}
\bysame, \emph{Regularity partitions and the topology of graphons}, An
  irregular mind ({S}zemer\'edi is 70) (I.~B{\'a}r{\'a}ny and J.~Solymosi,
  eds.), Bolyai Society Mathematical Studies, vol.~21, Springer, 2010,
  pp.~415--446.

\bibitem{mcdiarmid1989method}
C.~McDiarmid, \emph{On the method of bounded differences}, Surveys in
  combinatorics \textbf{141} (1989), no.~1, 148--188.

\bibitem{CMUC}
J.~Ne{\v s}et{\v r}il and P.~Ossona~de Mendez, \emph{A model theory approach to
  structural limits}, Commentationes Mathematic{\ae} Universitatis Carolin{\ae}
  \textbf{53} (2012), no.~4, 581--603.

\bibitem{MapLim}
\bysame, \emph{Limits of mappings}, in preparation, \noopsort{3000}.

\bibitem{Zhao2014}
Y.~Zhao, \emph{Hypergraph limits: a regularity approach}, arXiv:1302.1634v3
  [math.CO], March 2014.

\end{thebibliography}
\providecommand{\noopsort}[1]{}\providecommand{\noopsort}[1]{}
\providecommand{\bysame}{\leavevmode\hbox to3em{\hrulefill}\thinspace}
\providecommand{\MR}{\relax\ifhmode\unskip\space\fi MR }
\providecommand{\MRhref}[2]{%
  \href{http://www.ams.org/mathscinet-getitem?mr=#1}{#2}
}
\providecommand{\href}[2]{#2}

\end{document}